\newcommand{\IR}{\ensuremath{\mathbb{R}}}
\newcommand{\IN}{\ensuremath{\mathbb{N}}}
\newcommand{\IZ}{\ensuremath{\mathbb{Z}}}
\newcommand{\IQ}{\ensuremath{\mathbb{Q}}}
\newcommand{\IE}{\ensuremath{\mathbb{E}}}
\newcommand{\Var}{\mathrm{Var}}
\newcommand{\Hmix}{\ensuremath{H^{r,{\rm mix}}(\IR^d)}}
\newcommand{\Hmixunit}{\ensuremath{{H^{r,{\rm mix}}([0,1]^d)}}}
\newcommand{\Hmixcomp}{\ensuremath{\mathring{H}^{r,{\rm mix}}(\IR^d)}}
\newcommand{\Hmixunitcomp}{\ensuremath{\mathring{H}^{r,{\rm mix}}([0,1]^d)}}
\newcommand{\Hiso}{\ensuremath{H^s(\IR^d)}}
\newcommand{\Hisounit}{\ensuremath{{H^s([0,1]^d)}}}
\newcommand{\Hisocomp}{\ensuremath{\mathring{H}^s(\IR^d)}}
\newcommand{\Hisounitcomp}{\ensuremath{\mathring{H}^s([0,1]^d)}}
\newcommand{\Cc}{\ensuremath{C_c(\IR^d)}}
\newcommand{\supp}{\mathop{\mathrm{supp}}}
\newcommand{\diag}{\mathop{\mathrm{diag}}}
\newcommand{\mixnorm}[1]{\left\Vert#1\right\Vert_{{H^{r,{\rm mix}}(\IR^d)}}} 
\newcommand{\mixscalar}[2]{{\left\langle#1,#2\right\rangle_{H^{r,{\rm mix}}(\IR^d)}}}
\newcommand{\isonorm}[1]{\left\Vert#1\right\Vert_{{H^s}(\IR^d)}}
\newcommand{\isoscalar}[2]{\left\langle#1,#2\right\rangle_{{H^s(\IR^d)}}}
\newcommand{\lnorm}[1]{\left\Vert#1\right\Vert_{L^2(\IR^d)}}
\newcommand{\lnormunit}[1]{\left\Vert#1\right\Vert_{L^2([0,1]^d)}}
\newcommand{\lscalar}[2]{\left\langle#1,#2\right\rangle_{L^2(\IR^d)}}
\newcommand{\lscalarunit}[2]{\left\langle#1,#2\right\rangle_{L^2([0,1]^d)}}
\newcommand{\set}[1]{\left\{#1\right\}}
\newcommand{\abs}[1]{\left|#1\right|}
\newcommand{\braces}[1]{\left(#1\right)}
\renewcommand{\d}{{\rm d}} 
\newcommand{\scalar}[2]{\left\langle#1,#2\right\rangle}
\newcommand{\mixnormunit}[1]{\left\Vert#1\right\Vert_{{H^{r,{\rm mix}}([0,1]^d)}}} 
\newcommand{\mixscalarunit}[2]{{\left\langle#1,#2\right\rangle_{H^{r,{\rm mix}}([0,1]^d)}}}
\newcommand{\isonormunit}[1]{\left\Vert#1\right\Vert_{{H^s}([0,1]^d)}}
\newcommand{\isoscalarunit}[2]{\left\langle#1,#2\right\rangle_{{H^s([0,1]^d)}}}
\newcommand{\diff}{D} 
\theoremstyle{break}
\newtheorem{thm}{Theorem}
\newtheorem{prop}{Proposition}
\newtheorem{cor}{Corollary}
\theoremstyle{plain}
\newtheorem{lemma}{Lemma}
\patchcmd{\abstract}{small}{}{}{}
\title{On the Randomization of Frolov's Algorithm for Multivariate Integration} 
\author{David Krieg}
\begin{document}

\thispagestyle{empty}

\begin{minipage}{0.99\linewidth}
\begin{minipage}[t][33mm]{0.2\linewidth}
\includegraphics[width=19mm]{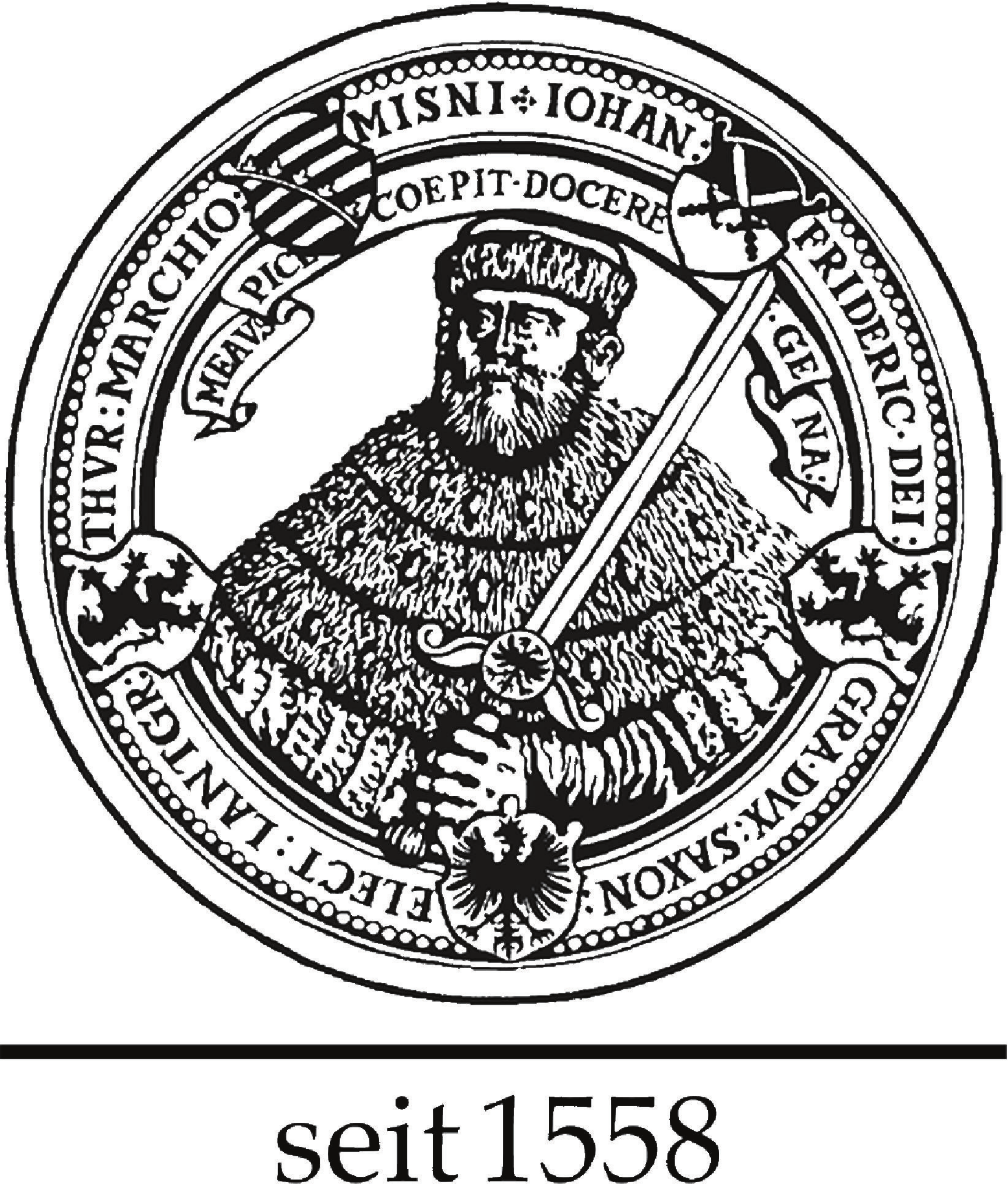} 
\end{minipage}
\begin{minipage}[b]{0.8\linewidth}
\hspace{4.1cm}{\large Friedrich-Schiller-Universit\"at Jena}\\
\hspace*{2cm} \rule[6.2pt]{2cm}{0.2mm}\rule[6.2pt]{7.5cm}{1mm}
\end{minipage}
\end{minipage}

\vspace*{13mm}
\large

\begin{center}
{\bf \Large On the Randomization of Frolov's Algorithm for Multivariate Integration}

\vspace*{10mm}

M\,A\,S\,T\,E\,R\,A\,R\,B\,E\,I\,T

\vspace*{3mm}

zur Erlangung des akademischen Grades

\vspace*{3mm}

Master of Science (M.\,Sc.)

\vspace*{3mm}

im Studiengang Mathematik

\vspace*{10mm}

FRIEDRICH-SCHILLER-UNIVERSITÄT JENA

\vspace*{3mm}

Fakultät für Mathematik und Informatik

\vspace*{10mm}

eingereicht von David Krieg

\vspace*{3mm}

geboren am 08.07.1991 in Würzburg

\vspace*{10mm}

Betreuer: Prof.\,Dr.\,Erich Novak

\vspace*{14mm}

Jena, 04.02.2016 
\end{center}

\thispagestyle{empty}
\cleardoublepage
\normalsize
\thispagestyle{empty}

\begin{abstract}
We are concerned with the numerical integration of functions from 
the Sobolev space $\Hmixunit$ of dominating mixed smoothness
$r\in\IN$ over the $d$-dimensional unit cube.

In \cite{frolov}, K.\,K.\,Frolov introduced a deterministic quadrature
rule whose worst case error has the order $n^{-r} \, (\log n)^{(d-1)/2}$
with respect to the number $n$ of function evaluations.
This is known to be optimal.
In \cite{krinov}, 39 years later, Erich Novak and me introduced a
randomized version of this algorithm using $d$ random dilations.
We showed that its error is bounded above by a constant multiple of
$n^{-r-1/2} \, (\log n)^{(d-1)/2}$
in expectation and by $n^{-r} \, (\log n)^{(d-1)/2}$ almost surely.
The main term $n^{-r-1/2}$ is again optimal and it turns out
that the very same algorithm is also optimal for the isotropic Sobolev space
$\Hisounit$ of smoothness $s>d/2$.
We also added a random shift to this algorithm to make it unbiased.
Just recently, Mario Ullrich proved in \cite{ullrichneu} that the expected
error of the resulting algorithm on $\Hmixunit$ is even bounded above by $n^{-r-1/2}$.
This thesis is a review of the mentioned upper bounds and their proofs.
\end{abstract}

\newpage

\tableofcontents

\newpage
\section{Introduction}

Many applications deal with multivariate functions $f$ which are smooth
in the sense that certain weak derivatives $\diff^\alpha f$ exist and are
square-integrable, functions from a \textit{Sobolev space}.

Which derivatives $\diff^\alpha f$ of $f$ are known to be existent and
square-integrable highly depends on the actual problem.
Classically, $\alpha$ covers the range of all vectors
in $\IN_0^d$ with $\Vert\alpha\Vert_1\leq s$ for some $s\in\IN$.
The corresponding Sobolev space is called \textit{isotropic Sobolev space of smoothness $s\in\IN$}.
For instance, the solutions of elliptic partial differential
equations in general and Poisson's equation in particular, have this form.
They typically appear in electrostatics or continuum mechanics.

But often $f$ is known to satisfy a stronger smoothness condition:
Derivatives $\diff^\alpha f$ for each $\alpha\in\IN_0^d$
with $\Vert\alpha\Vert_\infty\leq s$ exist and are square-integrable.
This is typically the case, if $f$ is a tensor product of $s$-times 
differentiable functions of one variable: $f(x_1,\dots,x_d)=f_1(x_1)\cdot\hdots\cdot f_d(x_d)$.
We say that $f$ is from a \textit{Sobolev space of dominating mixed smoothness $s$}.
For example, solutions of the electronic Schrödinger equation are of this
form.\\

We are concerned with the numerical integration of such functions and refer to
\cite{hartri} and \cite{giltru} for a treatise on elliptic partial
differential equations and their connection with Sobolev spaces
and to \cite{yserentant} for further information about
electronic wave functions.

More precisely,
we want to use linear quadrature rules to approximate the integral $I_d(f)$ of integrable,
real valued functions $f$ in $d$ real variables, with a particular interest
in functions with dominating mixed smoothness $s$.
A linear \textit{quadrature rule}, \textit{algorithm} or \textit{method} $A_n$
is given by a finite number $n$ of weights $a_1,\dots,a_n\in\IR$ and nodes
$x^{(1)},\dots,x^{(n)}\in\IR^d$, and the rule
\[
A_n(f)=\sum\limits_{j=1}^{n} a_j\, f\braces{x^{(j)}}
.\]
All these numbers and vectors can be deterministic or random variables.
Since $n$ counts the number of function values computed by $A_n$,
it is a measure for the cost of $A_n$, commonly
referred to as \textit{information cost} of the algorithm.

The error of $A_n$ associated with the integration of $f$ is
$\abs{A_n(f)-I_d(f)}$.
We are interested in sequences $\braces{A_n}_{n\in\IN}$ of quadrature rules
whose error decreases fast with respect to growing information cost $n$.
In this sense, numerical integration of functions 
with dominating mixed smoothness $s$ is significantly easier
than the integration of functions with isotropic smoothness $s$,
especially if the number $d$ of variables is large:
It turns out that the convergence order $n^{-s-1/2}$ can be achieved for the expected error,
while $n^{-s/d-1/2}$ is the best possible rate in the isotropic case.\\

From now on, for the sake of distinction, we will use $s$ as a parameter for isotropic
smoothness and $r$ as a parameter for dominating mixed smoothness.
The smoothness parameters $r$ and $s$ and the dimension $d$
are arbitrary natural numbers, with the single condition that $s>d/2$.
But they are considered to be fixed in the sense that
any constant in this thesis is merely a constant with respect to the
information cost $n$ and may depend on $r,s$ and $d$.\\

Let us end this introductory section with an outline of the thesis.

We start with a brief compilation of the definitions and fundamental properties
of the above mentioned Sobolev spaces.
In Section~\ref{basicquadrulesection}, we will present a familiy of deterministic quadrature rules
for the integration of compactly supported, continuous functions.
Among those rules is Frolov's algorithm, which will be examined in
Section \ref{frolovsrulesection}.
With respect to the information cost $n$, its integration error for functions $f$ with
dominating mixed smoothness $r$ and compact support in the open unit cube $(0,1)^d$
is bounded above by a constant multiple of
$n^{-r} \, (\log n)^{(d-1)/2}$ times the corresponding norm of $f$.
The order $n^{-r} \, (\log n)^{(d-1)/2}$ is optimal.
For functions with support in $(0,1)^d$ and isotropic smoothness $s$
the order $n^{-s/d}$ is achieved, which is also optimal.

In Section~\ref{randomdilationsection}, we will add random dilations to Frolov's algorithm
and examine the integration error of the resulting algorithm for the same types of functions.
We will see that in both cases the random dilations improve the order of
the algorithm's error by $1/2$ in expectation, while
not changing it in the worst case.
The additional random shift introduced in Section \ref{randomshiftsection} 
makes the algorithm unbiased and,
in case of functions with dominating mixed smoothness $r$ and compact support in $(0,1)^d$,
further improves the order of its expected error by a logarithmic term.

Section \ref{transformationsection} shows 
that the condition of having support in $(0,1)^d$ can be dropped
by applying a suitable change of variables to the above algorithms.
The resulting algorithms satisfy the error bounds from above
for any function on $[0,1]^d$ with dominating mixed smoothness $r$
or isotropic smoothness $s$. 
Beyond that, the change of variables preserves unbiasedness.

\newpage
\section{The Function Spaces}
\label{functionclassessec}

For natural numbers $r$ and $d$ the Sobolev space $\Hmix$ of dominating
mixed smoothness $r$ is the real vector space
\[
\Hmix = \set{f\in L^2(\IR^d)\mid \diff^\alpha f 
\in L^2(\IR^d) \text{ for every } \alpha \in \set{0,\dots,r}^d}
\]
of $d$-variate, real valued functions, equipped with the scalar product
\[
\mixscalar{f}{g}= \sum\limits_{\alpha \in \set{0,\dots,r}^d} 
\lscalar{\diff^\alpha f}{\diff^\alpha g}
.\]
The scalar product induces the norm
\[
\mixnorm{f}= \left(\sum\limits_{\alpha \in \set{0,\dots,r}^d} 
\lnorm{\diff^\alpha f}^2\right)^{1/2}.
\]

It is known that $\Hmix$ is a Hilbert space and its elements can be considered 
to be continuous functions.
In this thesis, the Fourier transform is the unique continuous linear operator
$\mathcal{F}: L^2(\IR^d)\to L^2(\IR^d)$
satisfying
\[
\mathcal{F}f(y) = \int_{\IR^d}   f(x)\, e^{-2\pi i \scalar{x}{y}}  \, \d x
\]
for integrable $f:\IR^d \to \IR$ and $y\in\IR^d$.
The space $\Hmix$ contains exactly those 
functions $f\in L^2(\IR^d)$ with ${\mathcal{F}f \cdot h_r^{1/2} \in L^2(\IR^d)}$ 
for the Fourier transform $\mathcal{F}f$ of $f$ and the weight function
\[
h_r: \IR^d \to \IR^+,\quad h_r(x)= \sum\limits_{\alpha\in\{0,\dots,r\}^d} 
\prod\limits_{j=1}^{d} |2\pi x_j|^{2\alpha_j} 
= \prod\limits_{j=1}^{d} \sum\limits_{k=0}^{r} |2\pi x_j|^{2k}.
\]
In terms of its Fourier transform, the norm of $f\in\Hmix$ is given by
\[
\mixnorm{f}^2= \int\limits_{\IR^d} \left|\mathcal{F}f(x)\right|^2\cdot h_r(x) \, \d x.
\]

Analogously, the isotropic Sobolev space $\Hiso$ of smoothness $s\in\IN$ is
\[
\Hiso = \set{f\in L^2(\IR^d)\mid \diff^\alpha f \in L^2(\IR^d) 
\text{ for every } \alpha \in \IN_0^d\text{ with } \Vert\alpha\Vert_1\leq s}
,\]
equipped with the scalar product
\[
\isoscalar{f}{g}= \sum\limits_{\Vert\alpha\Vert_1\leq s} 
\lscalar{\diff^\alpha f}{\diff^\alpha g}
\]
and its induced norm $\isonorm{f}$.
For $\alpha\in\IN_0^d$, we will frequently use the abbreviation
$\abs{\alpha}=\Vert\alpha\Vert_1=\sum_{j=1}^{d}\abs{\alpha_j}$.

The space $\Hiso$ is a Hilbert space, too. In the following, we will assume that $s$
is greater than $d/2$.
Then $\Hiso$ also consists of continuous functions, exactly those functions 
$f\in L^2(\IR^d)$ with ${\mathcal{F}f \cdot v_s^{1/2}\in L^2(\IR^d)}$ 
for the Fourier transform $\mathcal{F}f$ of $f$ and the weight function
\[
v_s: \IR^d \to \IR^+,\quad v_s(x)= \sum\limits_{\abs{\alpha}\leq s} 
\prod\limits_{j=1}^{d} |2\pi x_j|^{2\alpha_j}
\asymp \left( 1+\Vert x\Vert_2^2\right)^s
.\]
In terms of its Fourier transform, the norm of $f\in\Hiso$ is given by
\[
\isonorm{f}^2= \int\limits_{\IR^d} \left|\mathcal{F}f(x)\right|^2\cdot v_s(x) \, \d x 
.\]

Furthermore, let $\Cc$ be the real vector space of all continuous real valued functions 
with compact support in $\IR^d$. The spaces $\Hmixunitcomp$ and $\Hisounitcomp$ of 
functions in $\Hmix$ or $\Hiso$ with compact support in the
unit cube are subspaces of $\Cc$.
They can also be considered as subspaces of the Hilbert space
\[
\Hmixunit= \set{f\in L^2([0,1]^d) \mid \diff^\alpha 
f \in L^2([0,1]^d) \text{ for every } \alpha \in \set{0,\dots,r}^d}
,\]
equipped with the scalar product
\[
\mixscalarunit{f}{g}= \sum\limits_{\alpha \in \set{0,\dots,r}^d} 
\lscalarunit{\diff^\alpha f}{\diff^\alpha g}  , 
\]
or the Hilbert space
\[
\Hisounit= \set{f\in L^2([0,1]^d) 
\mid \diff^\alpha f \in L^2([0,1]^d) \text{ for } \alpha 
\in \IN_0^d\text{ with } \abs{\alpha}\leq s}
,\]
with the scalar product
\[
\isoscalarunit{f}{g}= \sum\limits_{\abs{\alpha}\leq s} 
\lscalarunit{\diff^\alpha f}{\diff^\alpha g} 
.\]

\newpage
\section{The Basic Quadrature Rule}
\label{basicquadrulesection}

We introduce a family of deterministic and linear quadrature rules.
This family is fundamental to our studies.
All the algorithms to be presented are based on the following definition.

\medskip 
\noindent
{\bf Algorithm.} \
Let $S\in\IR^{d\times d}$ be invertible and $v$ be a vector in $\IR^d$.
We define
\[
Q_S^v(f)=\frac{1}{|\det S|} 
\sum\limits_{m\in\IZ^d} f\left(S^{-\top}(m+v)\right)
\]
for any admissible input function $f:\IR^d\to\IR$. We call $v$
\emph{shift parameter} and denote by $Q_S$ the algorithm $Q_S^v$ for shift
parameter $v=0$.

\medskip

The matrix $S^{-\top}$ is the transpose of the inverse of $S$.
For now, $S$ can be any invertible matrix. But
later on, it will be a fixed matrix $B$ multiplied with a number
$n^{1/d}$ and a \textit{dilation matrix} $\hat{u}=\diag(u_1,\dots,u_d)$ for
a \textit{dilation parameter} $u\in\IR^d$.
The dilation parameter $u\in\IR^d$ and shift parameter $v\in\IR^d$
are also arbitrary. As we go along, they will be chosen as
independent random variables $U$ and $V$
that are uniformly distributed in $[1,2^{1/d}]^d$ and $[0,1]^d$, respectively.\\

The rule $Q_S^v$ adds up the values of $f$ at the lattice points
$\braces{S^{-\top}\braces{m+v}}$, $m\in\IZ^d$, in the
corner of each parallelepiped $\braces{S^{-\top}\braces{m+v+[0,1]^d}}$
weighted with the volume $\abs{\det S}^{-1}$ of this parallelepiped.
The value $Q_S^v(f)$ hence can be thought of as a Riemann sum of $f$
over $\IR^d$ with respect to the partition
$\set{S^{-\top}\braces{m+v+[0,1]^d}\mid m\in\IZ^d}$.\\

Admissible input functions are, for instance, functions $f$
with compact support.
For such functions the above sum is a finite sum.
To integrate $f$, the algorithm $Q_S^v$ uses the nodes $S^{-\top}(m+v)$, 
where $m\in\IZ^d$ is a lattice point in the 
compact set $\braces{S^\top\left(\supp f\right)-v}$ of volume
$\braces{\det(S)\cdot \lambda^d\left(\supp f\right)}$.
Here, $\lambda^d$ is the Lebesgue measure in $\IR^d$.
The indicated volume is the approximate number of function values computed by $Q_S^v$.
In particular, the number of nodes of $Q_{aS}^v$ for growing $a\geq 1$ is of order $a^d$. 
The following simple lemma gives an exact upper bound, see~\cite{skriganov} for other bounds.

\begin{lemma}
\label{anlemma}
Suppose $f: \IR^d \to \IR$
is supported in an axis-parallel 
cube of edge length $l>0$. For any invertible matrix $S\in\IR^{d\times d}$, 
$v\in\IR^d$ and $a\geq 1$ the quadrature rule $Q_{aS}^v$ uses at most
$
\left(l\cdot\Vert S\Vert_1+1\right)^d\cdot a^d
$
function values of $f$.
\end{lemma}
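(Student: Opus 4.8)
The plan is to reduce the statement to counting integer points in a bounded set and then to enclose that set in an axis-parallel box whose edge lengths I can control by $\Vert S\Vert_1$.

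First I would observe that $Q_{aS}^v$ only needs to evaluate $f$ at those nodes lying in $\supp f$, since all other summands vanish. Writing the node as $(aS)^{-\top}(m+v)=a^{-1}S^{-\top}(m+v)$, it lies in $\supp f$ exactly when $m+v\in (aS)^\top(\supp f)=a\,S^\top(\supp f)$, that is, when $m\in a\,S^\top(\supp f)-v$. Hence the number of used function values equals the number of lattice points $m\in\IZ^d$ in the set $a\,S^\top(\supp f)-v$.

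Next, since $\supp f$ sits inside an axis-parallel cube $C=x_0+[0,l]^d$, this lattice-point set is contained in the parallelepiped $a\,S^\top C-v=\braces{a\,S^\top x_0-v}+a\,l\,S^\top[0,1]^d$, and I would enclose the latter in the smallest axis-parallel box. The crucial computation is the length of its projection onto the $i$-th coordinate axis: for $y\in[0,1]^d$ the $i$-th coordinate $\braces{S^\top y}_i=\sum_j S_{ji}\,y_j$ ranges over an interval of length $\sum_j\abs{S_{ji}}$, the absolute column sum of $S$, which is at most $\Vert S\Vert_1$. Thus every edge of the enclosing box has length at most $a\,l\,\Vert S\Vert_1$. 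It then remains to count integer points in a box: an interval of length $L$ contains at most $L+1$ integers, so the box contains at most $\prod_{i=1}^d\braces{a\,l\,\Vert S\Vert_1+1}=\braces{a\,l\,\Vert S\Vert_1+1}^d$ lattice points. Finally, using $a\geq1$ I would absorb the additive constant into the factor $a$ via $a\,l\,\Vert S\Vert_1+1\leq a\braces{l\,\Vert S\Vert_1+1}$, which yields the claimed bound $\braces{l\,\Vert S\Vert_1+1}^d a^d$.

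The computation is elementary throughout; the only place demanding care is the bookkeeping with $S^\top$, to confirm that the relevant projection length is exactly the absolute column sum of $S$ and is therefore controlled by $\Vert S\Vert_1$ rather than by some other matrix norm. Getting that transpose right, and combining the additive $+1$ with the hypothesis $a\geq1$ at the very end, are the only subtle points.
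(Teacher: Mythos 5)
Your proposal is correct and follows essentially the same route as the paper: reduce to counting $m\in\IZ^d$ with $m+v\in aS^\top(\supp f)$, enclose that set in an axis-parallel box of edge length $al\Vert S\Vert_1$ (the paper phrases your column-sum computation as $\Vert S^\top x\Vert_\infty\leq\Vert S^\top\Vert_\infty=\Vert S\Vert_1$), count at most $L+1$ integers per edge, and absorb the $+1$ using $a\geq 1$. No gaps.
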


\begin{proof}
By assumption, $f$ has compact support in $\frac{l}{2}\cdot [-1,1]^d+x_0$ 
for some $x_0\in\IR^d$. The number of computed function values is the
number of points $m\in\IZ^d$ for which $(aS)^{-\top}(m+v)$ is
in $\supp f$ and hence bounded by the size of
\[\begin{split}
M&=\set{m\in\IZ^d \mid (aS)^{-\top}(m+v)\in \frac{l}{2}\cdot [-1,1]^d+x_0}\\
&= \set{m\in\IZ^d \mid m+\left(v-aS^\top x_0\right)\in \frac{al}{2}\cdot S^\top [-1,1]^d}
.\end{split}\]
Since $\Vert S^\top x\Vert_\infty\leq \Vert S^\top\Vert_\infty 
=\Vert S\Vert_1$ for $x\in[-1,1]^d$,
\[
M \subseteq \set{m\in\IZ^d \mid m+\left(v-aS^\top x_0\right)\in 
\left[-\frac{al}{2}\Vert S\Vert_1,\frac{al}{2}\Vert S\Vert_1\right]^d}
\]
and $\vert M\vert \leq \left(al\Vert S\Vert_1+1\right)^d$. 
With $1\leq a$ we get the estimate of Lemma~\ref{anlemma}.
\end{proof}

The error of this algorithm for integration on $\Cc$ can be expressed in terms of the Fourier transform.

\begin{lemma}
\label{errorlemma}
For any invertible matrix $S\in\IR^{d\times d}$, $v\in\IR^d$ and $f\in C_c(\IR^d)$
\[
\left| Q_S^v(f)-I_d(f)\right| 
\leq \sum\limits_{m\in\IZ^d\setminus\set{0}} \left| \mathcal{F}f(Sm)\right|
.\]
\end{lemma}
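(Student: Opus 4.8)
The plan is to recognize the statement as an instance of the Poisson summation formula and to obtain it in that way. First I would rewrite the quadrature rule as a periodization: setting $g(x)=\frac{1}{\abs{\det S}}\,f\braces{S^{-\top}x}$, the definition of $Q_S^v$ reads
\[
Q_S^v(f)=\sum_{m\in\IZ^d} g(m+v).
\]
Since $f$ has compact support this is a finite sum, so the function $G(v):=Q_S^v(f)$ is a well defined, continuous and $\IZ^d$-periodic function of the shift parameter $v$. This reformulation is what makes the Fourier analysis available.

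Next I would compute the Fourier coefficients of $G$. For $k\in\IZ^d$, integrating the finite sum term by term over $[0,1]^d$ and exploiting that $e^{-2\pi i\scalar{k}{m+v}}=e^{-2\pi i\scalar{k}{v}}$ for $m\in\IZ^d$ collapses the periodization back into a single integral over $\IR^d$:
\[
\int_{[0,1]^d} G(v)\, e^{-2\pi i\scalar{k}{v}}\,\d v
=\int_{\IR^d} g(x)\,e^{-2\pi i\scalar{k}{x}}\,\d x
=\mathcal{F}g(k).
\]
A change of variables $x=S^\top u$, together with $\scalar{S^\top u}{k}=\scalar{u}{Sk}$ and $\abs{\det S^\top}=\abs{\det S}$, then yields $\mathcal{F}g(k)=\mathcal{F}f(Sk)$. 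In particular the zeroth coefficient is $\mathcal{F}f(0)=I_d(f)$, which already isolates the integral as the ``main term'' of the Fourier series.

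It remains to pass from the Fourier coefficients back to $G$ itself, that is, to justify
\[
G(v)=\sum_{k\in\IZ^d}\mathcal{F}f(Sk)\,e^{2\pi i\scalar{k}{v}}.
\]
I expect this convergence step to be the only genuine obstacle, because $f$ is merely continuous and its Fourier transform need not decay fast enough for the series to converge unconditionally. I would therefore argue by cases. If the right-hand side of the asserted inequality is infinite, there is nothing to prove. If it is finite, then $\sum_{k\in\IZ^d}\abs{\mathcal{F}f(Sk)}<\infty$, so the trigonometric series above converges absolutely and uniformly to a continuous function whose Fourier coefficients are exactly the $\mathcal{F}f(Sk)$; by uniqueness of the Fourier coefficients of a continuous periodic function, this limit must coincide with $G$. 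Subtracting the $k=0$ term then gives
\[
Q_S^v(f)-I_d(f)=\sum_{k\in\IZ^d\setminus\set{0}}\mathcal{F}f(Sk)\,e^{2\pi i\scalar{k}{v}},
\]
and taking absolute values, using $\abs{e^{2\pi i\scalar{k}{v}}}=1$ together with the triangle inequality, finishes the argument.
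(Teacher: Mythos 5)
Your proof is correct and follows essentially the same route as the paper's: both reduce the claim to the Poisson summation formula applied to the compactly supported function obtained from $f$ by the affine change of variables, and both dispose of the convergence issue in the same way, by noting the inequality is trivial when the right-hand side is infinite. The only difference is presentational: you derive the needed instance of Poisson summation yourself by computing the Fourier coefficients of the periodic function $v\mapsto Q_S^v(f)$ and invoking uniqueness of Fourier coefficients, whereas the paper cites the formula from the literature.
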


\begin{proof}
The function $g=f\circ S^{-\top}(\cdot +v)$ is continuous with compact support. 
Hence, the Poisson summation formula and an affine linear substitution $x=S^\top y-v$ yield
\[\begin{split}
Q_S^v(f)&=\frac{1}{\abs{\det S}}\sum\limits_{m\in\IZ^d} g(m) 
= \frac{1}{\abs{\det S}}\sum\limits_{m\in\IZ^d} \mathcal{F}g(m)\\
&= \frac{1}{\abs{\det S}}\sum\limits_{m\in\IZ^d} 
\int\limits_{\IR^d} f\left(S^{-\top}(x+v)\right)\cdot e^{-2\pi i\langle x,m\rangle}
\, \d x \\
&= \sum\limits_{m\in\IZ^d} \int\limits_{\IR^d} f\left(y\right)
\cdot e^{-2\pi i\langle S^\top y-v,m\rangle} \, \d y \\
&= \sum\limits_{m\in\IZ^d} \mathcal{F}f(Sm)\cdot e^{2\pi i\langle v,m\rangle}
,\end{split}\]
if the latter series converges absolutely, see \cite[pp.\,356]{koch}. If not,
the stated inequality is obvious.
This proves the statement, 
since $I_d(f)=\mathcal{F}f(S\cdot 0)\cdot e^{2\pi i\langle v,0\rangle}$.
\end{proof}

\newpage
\section{Frolov's Deterministic Algorithm on $\Hmixunitcomp$}
\label{frolovsrulesection}

It is known how to choose the matrix $S$ in the rule $Q_S^v$ to get a good 
deterministic quadrature rule on $\Hmixunitcomp$. 
Let the matrix $B\in \IR^{d\times d}$ satisfy the following three conditions:
\begin{itemize}
\item[(a)] $B$ is invertible,
\item[(b)] $\left|\prod\limits_{j=1}^{d}(Bm)_j\right|\geq 1$, 
for any $m\in\IZ^d\setminus\set{0}$,
\item[(c)] For any $x,y\in\IR^d$ the box $[x,y]$ with 
volume $V=\prod\limits_{j=1}^{d}|x_j-y_j|$ contains at most $V+1$ 
lattice points $Bm$, $m\in\IZ^d$,
\end{itemize}
where $[x,y]=\set{z\in\IR^d\mid z_j 
\text{ is inbetween of }x_j\text{ and }y_j\text{ for }j=1,\dots,d}$. 
Such a matrix shall be called a \textit{Frolov matrix}. 
Property (b) says that for $n>0$ every point of the lattice $n^{1/d}B\IZ^d$ 
but zero lies in the set $D_n$ of all vectors $x\in\IR^d$ 
with $\prod_{j=1}^{d}\abs{x_j}\geq n$,
the complement of a hyperbolic cross.

\begin{minipage}[h!]{.6\linewidth}
\includegraphics[width=.95\linewidth]{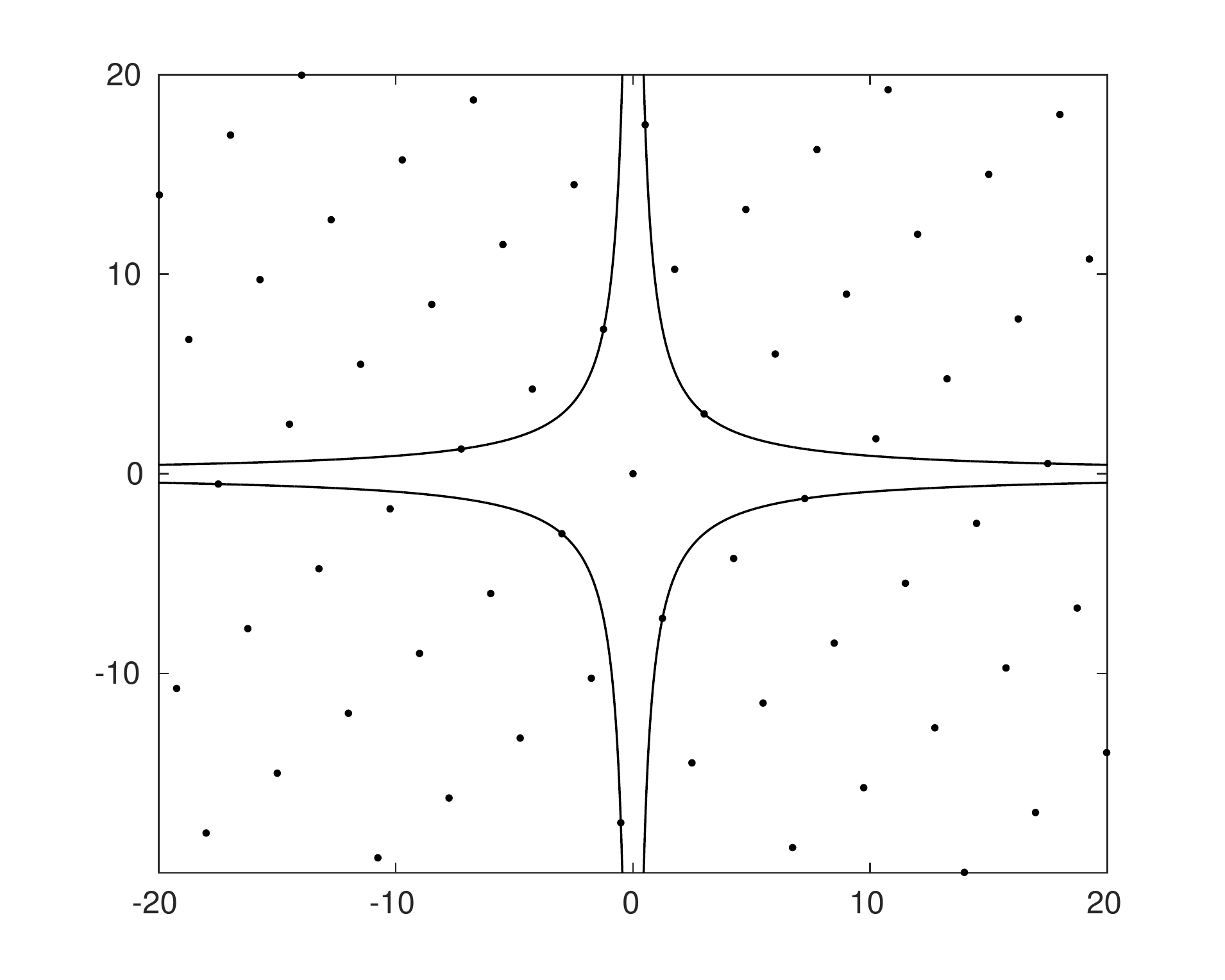}
\end{minipage}
\begin{minipage}[h!]{.35\linewidth}
This graphic shows the lattice $n^{1/d}B\IZ^d$ for $d=2$, $n=9$ and the Frolov matrix
\[B=\begin{pmatrix}
1 & 2-\sqrt{2}\\
1 & 2+\sqrt{2}
\end{pmatrix}
.\]
Except zero, every lattice point lies inside $D_9$.
\end{minipage}

It is known that one can construct such a matrix $B$ in the following way. 
Let $p\in\IZ[x]$ be a polynomial of degree $d$ with leading 
coefficient 1 which is irreducible over $\IQ$ and has $d$ different 
real roots $\zeta_1,\hdots,\zeta_d$. Then the matrix
\[B=\left(\zeta_i^{j-1}\right)_{i,j=1}^d\]
has the desired properties, as shown in \cite[p.\,364]{temlyakovbuch} and \cite{ullrich}. 
In arbitrary dimension $d$ we can choose $p(x)=(x-1)(x-3)\cdot\hdots\cdot(x-2d+1)-1$, 
see \cite{frolov} or \cite{ullrich}, but there are many other possible choices. 
For example, if $d$ is a power of two, we can set $p(x)=2\cos\left(d\cdot\arccos 
(x/2)\right)=2\,T_d(x/2)$, where $T_d$ is the Chebyshev 
polynomial of degree $d$, see \cite[p.\,365]{temlyakovbuch}. 
Then the roots of $p$ are explicitly given by $\zeta_j
=2\cos\left(\frac{2j-1}{2d}\pi\right)$ for $j=1,\hdots,d$.\\

From now on, let $B$ be an arbitrary but fixed, $d$-dimensional Frolov matrix.
Constants may depend on the choice of $B$.

\medskip 
\noindent
{\bf Algorithm.} \
For any natural number $n$, we consider the quadrature rule
$Q_{n^{1/d} B}$ from Section \ref{basicquadrulesection} with shift parameter zero.
This deterministic algorithm is usually referred to as \textit{Frolov's algorithm}.

\medskip

For input functions $f$ with support in $[0,1]^d$ the number of function values computed by $Q_{n^{1/d}B}$
is of order $n$. To be precise, Lemma \ref{anlemma} says that $Q_{n^{1/d}B}$ uses at most
$\left(\Vert B\Vert_1+1\right)^d\cdot n$ function values of $f$.

K.\,K.\,Frolov has already seen in 1976 that the algorithm $Q_{n^{1/d}B}$
is optimal on $\Hmixunitcomp$ in the sense of order of convergence.
It satisfies the following error bound.

\begin{thm}
\label{frolovboundtheorem}
There is some $c>0$ such that for
every $n\geq 2$ and $f\in \Hmixunitcomp$
\[
\abs{Q_{n^{1/d}B}(f)-I_d(f)} \leq\, c \,  n^{-r} 
\, (\log n)^\frac{d-1}{2} \, \mixnormunit{f}
.\]
\end{thm}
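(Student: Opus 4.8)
The plan is to start from Lemma~\ref{errorlemma} with $S=n^{1/d}B$ and $v=0$, which bounds the error by the lattice sum $\sum_{m\in\IZ^d\setminus\set{0}}\abs{\mathcal{F}f(n^{1/d}Bm)}$, and then to split this sum by the Cauchy--Schwarz inequality against the weight $h_r^{1/2}$. Writing $\Gamma=n^{1/d}B$ for brevity, this gives
\[
\abs{Q_{n^{1/d}B}(f)-I_d(f)}
\leq \braces{\sum_{m\neq 0}\abs{\mathcal{F}f(\Gamma m)}^2 h_r(\Gamma m)}^{1/2}
\braces{\sum_{m\neq 0}\frac{1}{h_r(\Gamma m)}}^{1/2}.
\]
The first factor will be bounded by a constant multiple of $\mixnormunit{f}$ and the second by a constant multiple of $n^{-r}(\log n)^{(d-1)/2}$; multiplying the two yields the claim.

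For the first factor I would use that, by the differentiation rule for the Fourier transform, $\mathcal{F}(\diff^\alpha f)(x)=\prod_{j=1}^d(2\pi i x_j)^{\alpha_j}\,\mathcal{F}f(x)$, so that
\[
\abs{\mathcal{F}f(x)}^2 h_r(x)=\sum_{\alpha\in\set{0,\dots,r}^d}\abs{\mathcal{F}(\diff^\alpha f)(x)}^2.
\]
Hence the first factor equals $\big(\sum_{\alpha}\sum_{m\neq0}\abs{\mathcal{F}(\diff^\alpha f)(\Gamma m)}^2\big)^{1/2}$, and it suffices to prove a sampling inequality: for every $g\in\Cc$ with $\supp g\subseteq[0,1]^d$ and every $n\geq 1$ one has $\sum_{m\in\IZ^d}\abs{\mathcal{F}g(\Gamma m)}^2\leq c\,\lnorm{g}^2$ with $c$ independent of $n$. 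Applying this to $g=\diff^\alpha f$ and summing over $\alpha$ then bounds the first factor by $c^{1/2}\mixnormunit{f}$. The sampling inequality itself I would obtain from the Poisson summation formula applied to $\abs{\mathcal{F}g}^2$: its transform is the autocorrelation $\eta\mapsto\int g(y)\,g(y-\eta)\,\d y$, which is supported in $[-1,1]^d$ and bounded by $\lnorm{g}^2$, so the sum reduces to a count of the points of the dual lattice $n^{-1/d}B^{-\top}\IZ^d$ inside $[-1,1]^d$; this count is of order $n\abs{\det B}$, which exactly cancels the Poisson prefactor $(n\abs{\det B})^{-1}$.

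The heart of the proof is the second factor, the Frolov lattice sum, and this is where properties (b) and (c) of the Frolov matrix enter. Using $h_r(x)\geq 2^{-d}\prod_{j=1}^d(1+\abs{2\pi x_j}^{2r})$, I would reduce it to estimating $\sum_{m\neq0}\prod_{j=1}^d\max(1,\abs{(\Gamma m)_j})^{-2r}$. By property (b) every nonzero point $\Gamma m$ satisfies $\prod_j\abs{(\Gamma m)_j}\geq n$, i.e.\ lies in the complement $D_n$ of the hyperbolic cross. I would then decompose $D_n$ into dyadic boxes on which $\abs{(\Gamma m)_j}\asymp 2^{k_j}$; on such a box the summand is $\asymp\prod_j 2^{-2rk_j^+}$, while property (c), rescaled to the lattice $\Gamma$, bounds the number of lattice points by $2^{\sum_j k_j}/n+1$. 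Summing the product of these two quantities over all admissible exponent tuples, subject to the constraint $\sum_j k_j\gtrsim\log_2 n$ coming from $D_n$, gives the bound $\lesssim n^{-2r}(\log n)^{d-1}$; the power $(\log n)^{d-1}$ arises precisely because the number of dyadic tuples with $\prod_j 2^{k_j}\asymp n$ is of order $(\log n)^{d-1}$.

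I expect the dyadic bookkeeping in this last step to be the main obstacle: one has to check that both the ``volume term'' $2^{\sum_j k_j}/n$ and the ``$+1$ term'' from property (c) contribute the same order $n^{-2r}(\log n)^{d-1}$, that the coordinates with $\abs{(\Gamma m)_j}<1$ are harmless, and that the geometric sums over the unbounded region $\set{\sum_j k_j\gtrsim\log_2 n}$ are genuinely dominated by the boundary layer $\sum_j k_j\approx\log_2 n$, which is where $r\geq 1$ is used. Once these estimates are in place, taking square roots and multiplying the two factors gives the asserted bound $c\,n^{-r}(\log n)^{(d-1)/2}\mixnormunit{f}$ for all $n\geq 2$.
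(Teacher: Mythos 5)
Your proposal is correct and follows essentially the same route as the paper, whose proof of this statement is the proof of Theorem~\ref{mixthmworstcase} (of which Theorem~\ref{frolovboundtheorem} is the special case $u=(1,\dots,1)$, $v=0$): the Cauchy--Schwarz split against $h_r$, the identity $h_r\abs{\mathcal{F}f}^2=\sum_\alpha\abs{\mathcal{F}\diff^\alpha f}^2$, and the dyadic decomposition of the complement of the hyperbolic cross via properties (b) and (c) all appear there, including the resolution of the bookkeeping issues you flag (the $+1$ in the point count is absorbed because $2^{\abs{\beta}}>n$ forces the volume term to dominate, the coordinates with $\beta_j=0$ are handled by the bound $h_r\geq\prod_j\bigl(1+\lfloor 2^{\beta_j-1}\rfloor^{2r}\bigr)\geq 2^{2r(\abs{\beta}-d)}$, and the tail sums converge since $2^{1-2r}<1$). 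The only genuine difference is your treatment of the first factor: the paper covers the support parallelepiped of $g_\alpha=\diff^\alpha f\circ(n^{1/d}B)^{-\top}$ by $O(n)$ integer unit cubes and applies Bessel's inequality on each, whereas you apply Poisson summation to $\abs{\mathcal{F}g}^2$ and count the $O(n)$ dual lattice points in $[-1,1]^d$ --- two standard, interchangeable arguments for the same $O(1)$ bound.
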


See also \cite{frolov} and \cite{ullrich} or my Bachelor thesis for a proof of this error
bound and its optimality.
In fact, this error bound holds uniformly for $Q_{n^{1/d}\hat{u}B}^v$ for any $u\in[1,2^{1/d}]^d$ and $v\in[0,1]^d$,
which is the statement of Theorem~\ref{mixthmworstcase} in Section~\ref{randomdilationsection1}.
Theorem \ref{frolovboundtheorem} is only a special case.\\

But Frolov's algorithm is also optimal among deterministic quadrature rules
on $\Hisounitcomp$ in the sense of order of convergence.
It satisfies:

\begin{thm}
\label{frolovboundisotheorem}
There is some $c>0$ such that for
every $n\geq 2$ and $f\in \Hisounitcomp$
\[
\abs{Q_{n^{1/d}B}(f)-I_d(f)} \leq\, c \,  n^{-s/d} 
\, \isonormunit{f}
.\]
\end{thm}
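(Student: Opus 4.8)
The plan is to follow the same route as for the mixed-smoothness bound in Theorem~\ref{frolovboundtheorem}, only replacing the weight $h_r$ by the isotropic weight $v_s$. By Lemma~\ref{errorlemma}, applied to $S=n^{1/d}B$ and $v=0$, it suffices to bound the Fourier sum
\[
\sum_{m\in\IZ^d\setminus\set{0}}\abs{\mathcal{F}f\braces{n^{1/d}Bm}}.
\]
I would insert the factor $v_s^{1/2}$ and split this sum by the Cauchy--Schwarz inequality into a \emph{function sum} $\braces{\sum_{m\neq 0}\abs{\mathcal{F}f(n^{1/d}Bm)}^2\,v_s(n^{1/d}Bm)}^{1/2}$ and a \emph{weight sum} $\braces{\sum_{m\neq 0}v_s(n^{1/d}Bm)^{-1}}^{1/2}$. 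The claim follows once the first factor is bounded by a constant multiple of $\isonormunit{f}$ uniformly in $n$, and the second by a constant multiple of $n^{-s/d}$.

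The weight sum is elementary. Using $v_s(x)\asymp\braces{1+\Vert x\Vert_2^2}^s$ and the invertibility of $B$, which yields $\Vert Bm\Vert_2\geq\sigma\Vert m\Vert_2$ for the smallest singular value $\sigma>0$ of $B$, I would estimate
\[
\sum_{m\neq 0}v_s\braces{n^{1/d}Bm}^{-1}\lesssim\sum_{m\neq 0}\braces{1+n^{2/d}\sigma^2\Vert m\Vert_2^2}^{-s}.
\]
Splitting $\IZ^d\setminus\set{0}$ into dyadic shells $\Vert m\Vert_2\asymp 2^k$, each containing $\lesssim 2^{kd}$ points and contributing $\lesssim 2^{kd}\braces{n^{2/d}2^{2k}}^{-s}=n^{-2s/d}2^{k(d-2s)}$, the geometric series converges precisely because $s>d/2$, giving $\sum_{m\neq 0}v_s(n^{1/d}Bm)^{-1}\lesssim n^{-2s/d}$. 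Note that only invertibility of $B$ enters here; the hyperbolic-cross property (b) is not used, which is why no logarithmic factor appears.

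The function sum is the crux. Since $s\in\IN$, the weight is a polynomial $v_s(x)=\sum_{\abs{\alpha}\leq s}\prod_j\abs{2\pi x_j}^{2\alpha_j}$, so that $\abs{\mathcal{F}f(x)}^2 v_s(x)=\sum_{\abs{\alpha}\leq s}\abs{\mathcal{F}(\diff^\alpha f)(x)}^2=\mathcal{F}\Phi(x)$, where, writing $\widetilde g(x)=g(-x)$, the function $\Phi=\sum_{\abs{\alpha}\leq s}\diff^\alpha f*\widetilde{\diff^\alpha f}$ is continuous, supported in $[-1,1]^d$, and satisfies $\Vert\Phi\Vert_\infty\leq\sum_{\abs{\alpha}\leq s}\lnorm{\diff^\alpha f}^2=\isonormunit{f}^2$ by Cauchy--Schwarz (the isotropic norms over $\IR^d$ and $[0,1]^d$ coincide for $f$ supported in the cube). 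The Poisson summation identity underlying Lemma~\ref{errorlemma}, applied to $\Phi$, then gives
\[
\sum_{m\in\IZ^d}\abs{\mathcal{F}f(n^{1/d}Bm)}^2 v_s(n^{1/d}Bm)=\sum_{m\in\IZ^d}\mathcal{F}\Phi(n^{1/d}Bm)=Q_{n^{1/d}B}(\Phi),
\]
where the absolute convergence needed to justify Poisson follows from $\mathcal{F}\Phi\geq 0$ (a continuous periodic function with nonnegative Fourier coefficients has summable coefficients). Finally I would bound $Q_{n^{1/d}B}(\Phi)$ crudely: it is a weighted sum of at most $\braces{2\Vert B\Vert_1+1}^d n$ values of $\Phi$ (Lemma~\ref{anlemma} with edge length $l=2$), each of modulus at most $\isonormunit{f}^2$, with weight $\abs{\det(n^{1/d}B)}^{-1}=\braces{n\abs{\det B}}^{-1}$, so that $Q_{n^{1/d}B}(\Phi)\leq\frac{\braces{2\Vert B\Vert_1+1}^d}{\abs{\det B}}\isonormunit{f}^2$ uniformly in $n$.

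The main obstacle is exactly this uniform bound on the function sum: the point is the cancellation of the factor $n$ from the number of nodes against the factor $n^{-1}$ from $\abs{\det(n^{1/d}B)}^{-1}$, which makes the estimate independent of $n$, together with the pointwise bound $\Vert\Phi\Vert_\infty\leq\isonormunit{f}^2$ and the (minor) justification of Poisson summation for the merely continuous function $\Phi$. Combining the two factors yields $\abs{Q_{n^{1/d}B}(f)-I_d(f)}\lesssim\isonormunit{f}\cdot n^{-s/d}$, as claimed.
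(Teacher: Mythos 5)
Your proof is correct, and while its skeleton coincides with the paper's (the theorem is proved there as the special case $u=(1,\dots,1)$, $v=0$ of Theorem~\ref{isothmworstcase}: Lemma~\ref{errorlemma}, then Cauchy--Schwarz against $v_s$, with the weight sum handled exactly as you do via $v_s(n^{1/d}Bm)\gtrsim n^{2s/d}\Vert m\Vert_2^{-2s}\cdot\Vert m\Vert_2^{4s}$, i.e.\ $\sum_{m\neq 0}v_s(n^{1/d}Bm)^{-1}\lesssim n^{-2s/d}\sum_{m\neq0}\Vert m\Vert_2^{-2s}<\infty$ for $2s>d$, using only invertibility of $B$), your treatment of the function sum is genuinely different. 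The paper also starts from $v_s(x)\abs{\mathcal{F}f(x)}^2=\sum_{\abs{\alpha}\leq s}\abs{\mathcal{F}\diff^\alpha f(x)}^2$, but then sets $g_\alpha=\diff^\alpha f\circ(n^{1/d}B)^{-\top}$, covers its support parallelepiped by $\abs{J_n}\asymp n$ integer unit cubes, applies Cauchy--Schwarz across those cubes, and uses Parseval's identity on each cube to turn the sum over $m\in\IZ^d$ into $\Vert g_\alpha\Vert_{L^2(\IR^d)}^2$; the factor $\abs{J_n}$ is then cancelled by one power of $\abs{\det(n^{1/d}B)}\asymp n$. You instead recognize the function sum as $Q_{n^{1/d}B}(\Phi)$ for the autocorrelation $\Phi=\sum_{\abs{\alpha}\leq s}\diff^\alpha f*\widetilde{\diff^\alpha f}$ via Poisson summation, and bound it by $\Vert\Phi\Vert_\infty\leq\isonormunit{f}^2$ times the node count of Lemma~\ref{anlemma} over $n\abs{\det B}$. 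The two cancellations of $n$ are the same phenomenon in different clothing, and your identity $\sum_m\abs{\mathcal{F}f(Sm)}^2=Q_S(\Phi)$ is close in spirit to the paper's Lemma~\ref{errorlemma2}. Your route is arguably cleaner and avoids the $\abs{J_n}$ bookkeeping, at the price of justifying Poisson summation for the merely continuous $\Phi$ -- which your appeal to the summability of nonnegative Fourier coefficients of a continuous periodization handles correctly. The only thing you give up relative to the paper is the uniformity in the dilation and shift parameters, which the statement at hand does not require and which your argument would in any case extend to verbatim.
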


This is a special case of Theorem~\ref{isothmworstcase} in Section~\ref{randomdilationsection1}.
See \cite{ln} for a proof of the optimality of this order.

\newpage
\section{The Effect of Random Dilations}
\label{randomdilationsection}

We study the impact of random dilations on Frolov's algorithm $Q_{n^{1/d}B}$.

\medskip 
\noindent
{\bf Algorithm.} \
For any natural number $n$ and shift parameter $v\in\IR^d$
we consider the method $Q_{n^{1/d}\hat{U}B}^v$
from Section \ref{basicquadrulesection}
with a dilation parameter $U$ that is uniformly distributed in the box $[1,2^{1/d}]^d$.

\medskip 

For input functions $f$ from $\Hisounitcomp$ or $\Hmixunitcomp$ the
information cost of $Q_{n^{1/d}\hat{U}B}^v$ is roughly between $\det(B) \cdot n$
and $2\cdot\det(B) \cdot n$. More precisely, it uses at most
$2\cdot\left(\Vert B\Vert_1+1\right)^d\cdot n$ function values of $f$.\\

\subsection{Worst Case Errors}
\label{randomdilationsection1}

In the worst case, the error of this method has the same order
of convergence like Frolov's algorithm, both for $\Hmixunitcomp$
and $\Hisounitcomp$.

\begin{thm}
\label{mixthmworstcase}
There is a constant $c>0$ such that for any shift parameter $v\in\IR^d$,
$n\geq 2$ and $f\in \Hmixunitcomp$
\[\begin{split}
\sup\limits_{u\in [1,2^{1/d}]^d} \abs{Q_{n^{1/d}\hat{u}B}^v(f)-I_d(f)} \leq\, c \,  n^{-r} 
\, (\log n)^\frac{d-1}{2} \, \mixnormunit{f}
.\end{split}\]
\end{thm}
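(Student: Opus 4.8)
The plan is to reduce the worst-case error over all dilations $u\in[1,2^{1/d}]^d$ to the Fourier-side estimate already provided by Lemma~\ref{errorlemma}, and then to control the resulting lattice sum uniformly in $u$. Applying Lemma~\ref{errorlemma} with $S=n^{1/d}\hat{u}B$ gives, for every $u$ and $v$,
\[
\abs{Q_{n^{1/d}\hat{u}B}^v(f)-I_d(f)}\leq \sum_{m\in\IZ^d\setminus\set{0}}\abs{\mathcal{F}f\braces{n^{1/d}\hat{u}Bm}}.
\]
Crucially the right-hand side does not depend on $v$, so the shift parameter drops out and it suffices to bound the sum uniformly over $u\in[1,2^{1/d}]^d$. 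First I would insert the Sobolev weight: writing each summand as $\abs{\mathcal{F}f(n^{1/d}\hat{u}Bm)}\,h_r(n^{1/d}\hat{u}Bm)^{1/2}\cdot h_r(n^{1/d}\hat{u}Bm)^{-1/2}$ and applying Cauchy--Schwarz over $m$, I get
\[
\sum_{m\neq 0}\abs{\mathcal{F}f(n^{1/d}\hat{u}Bm)}
\leq\braces{\sum_{m\neq 0}\abs{\mathcal{F}f(n^{1/d}\hat{u}Bm)}^2 h_r(n^{1/d}\hat{u}Bm)}^{1/2}
\braces{\sum_{m\neq 0}h_r(n^{1/d}\hat{u}Bm)^{-1}}^{1/2}.
\]
For the first factor I would use property (c) of the Frolov matrix: since $n^{1/d}\hat{u}B\IZ^d$ is a lattice of covolume $\abs{\det(n^{1/d}\hat{u}B)}=n\prod_j u_j\det B\asymp n$ whose points distribute regularly with respect to the weight $h_r$, the weighted $\ell^2$-sum is comparable to the integral $\int_{\IR^d}\abs{\mathcal{F}f(x)}^2 h_r(x)\,\d x=\mixnormunit{f}^2$, up to a constant independent of $u$ (here using that $f$ has support in $[0,1]^d$, so the comparison between the discrete lattice sum and the continuous integral is controlled by the Frolov point-counting bound).

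The main work is the second, $f$-independent factor $\Sigma(u):=\sum_{m\neq 0}h_r(n^{1/d}\hat{u}Bm)^{-1}$, and this is where I expect the real obstacle to lie. Using the lower bound $h_r(x)\geq\prod_{j=1}^d\braces{1+\abs{2\pi x_j}^{2r}}$ together with property (b), which forces $\prod_j\abs{(n^{1/d}\hat{u}Bm)_j}=n\prod_j u_j\cdot\abs{\prod_j(Bm)_j}\geq n$ for every nonzero $m$, each lattice point is pushed out past the hyperbolic cross of level $\asymp n$. The dilation $\hat{u}$ only rescales coordinates by bounded factors in $[1,2^{1/d}]$, so property (b) is preserved up to constants and the estimate is uniform in $u$. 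The task is then to show
\[
\Sigma(u)\leq c\,n^{-2r}\,(\log n)^{d-1}
\]
uniformly for $u\in[1,2^{1/d}]^d$; taking the square root of this and of the first factor yields exactly the claimed bound $c\,n^{-r}(\log n)^{(d-1)/2}\mixnormunit{f}$.

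To prove the bound on $\Sigma(u)$ I would count lattice points in dyadic hyperbolic shells. Partition $\IZ^d\setminus\set{0}$ according to the dyadic size $2^{k_j}\leq\abs{u_j(n^{1/d}Bm)_j}<2^{k_j+1}$ of each coordinate; on such a shell the summand $h_r(\cdot)^{-1}$ is of order $\prod_j 2^{-2rk_j}$, and property (c) of the Frolov matrix bounds the number of lattice points in the corresponding box by its volume plus one, namely $\lesssim\prod_j 2^{k_j}/n + 1$, uniformly in $u$ because the bounded dilation changes volumes by at most a factor $2$. Summing the geometric-type series $\prod_j 2^{-2rk_j}$ over those multi-indices $(k_1,\dots,k_d)$ compatible with the hyperbolicity constraint $\prod_j 2^{k_j}\gtrsim n$ produces the factor $n^{-2r}$, while the number of dyadic decompositions of a fixed product scale $N\asymp n$ into $d$ factors contributes the logarithmic power $(\log n)^{d-1}$. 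This dyadic-shell counting, carried out uniformly in $u$, is the technical heart of the argument; the rest is bookkeeping. Once $\Sigma(u)\leq c\,n^{-2r}(\log n)^{d-1}$ is established with a constant independent of $u$, taking the supremum over $u\in[1,2^{1/d}]^d$ and combining with the first Cauchy--Schwarz factor completes the proof.
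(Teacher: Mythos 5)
Your overall architecture is exactly the paper's: apply Lemma~\ref{errorlemma} with $S=n^{1/d}\hat{u}B$, observe that the resulting bound is independent of $v$, split the lattice sum by Cauchy--Schwarz against the weight $h_r$, and estimate the $f$-independent factor $\Sigma(u)=\sum_{m\neq 0}h_r(n^{1/d}\hat{u}Bm)^{-1}$ by dyadic hyperbolic shells, using property (b) of the Frolov matrix to empty every shell with $\prod_j 2^{k_j}\lesssim n$ and property (c) to count lattice points in the remaining ones. That part of your sketch, including where the factors $n^{-2r}$ and $(\log n)^{d-1}$ come from and the uniformity in $u$, is correct and coincides with the paper's argument.

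The gap is in the weighted factor $\sum_{m\neq 0}\abs{\mathcal{F}f(n^{1/d}\hat{u}Bm)}^2\,h_r(n^{1/d}\hat{u}Bm)$. You assert it is comparable to $\int_{\IR^d}\abs{\mathcal{F}f}^2h_r\,\d x=\mixnormunit{f}^2$ because the lattice points ``distribute regularly'' and the comparison is ``controlled by the Frolov point-counting bound.'' That is not a proof, and the mechanism you name is the wrong one: sampling a nonnegative $L^1$ density on a lattice of covolume $\asymp n$ is in general not comparable to its integral, however regularly the lattice points are distributed, because the density may concentrate on or away from the lattice; property (c) plays no role here. What actually closes this step is the compact support of $f$ combined with Parseval. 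Write
\[
h_r(x)\abs{\mathcal{F}f(x)}^2=\sum_{\alpha\in\set{0,\dots,r}^d}\abs{\mathcal{F}\diff^\alpha f(x)}^2
\]
and set $g_\alpha=\diff^\alpha f\circ(n^{1/d}\hat{u}B)^{-\top}$, which is supported in the parallelepiped $(n^{1/d}\hat{u}B)^\top[0,1]^d$. Then $\mathcal{F}\diff^\alpha f(n^{1/d}\hat{u}Bm)=\abs{\det(n^{1/d}\hat{u}B)}^{-1}\mathcal{F}g_\alpha(m)$; covering the support of $g_\alpha$ by the $\abs{J_n}\asymp n$ integer unit cubes that meet it and applying Parseval on each cube gives $\sum_m\abs{\mathcal{F}g_\alpha(m)}^2\leq\abs{J_n}\,\lnorm{g_\alpha}^2$, and undoing the substitution yields the bound $\abs{J_n}\,\abs{\det(n^{1/d}\hat{u}B)}^{-1}\mixnormunit{f}^2\lesssim\mixnormunit{f}^2$, uniformly in $u$. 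Without this step (or an equivalent one) the proof is incomplete.
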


\begin{proof}
Let $Q_{n^{1/d}\hat{u}B}^v$ be an arbitrary realization
of the algorithm $Q_{n^{1/d}\hat{U}B}^v$ under consideration.
By Lemma~\ref{errorlemma} and Hölder's inequality,
\[\begin{split}
&\abs{Q_{n^{1/d}\hat{u}B}^v(f)-I_d(f)}^2
\leq \left(\sum\limits_{m\in\IZ^d\setminus\set{0}} \left| 
\mathcal{F}f(n^{1/d}\hat{u}Bm)\right|\right)^2\\
&\leq \left(\sum\limits_{m\in\IZ^d\setminus\set{0}} h_r(n^{1/d}\hat{u}Bm)^{-1}\right)
\cdot\left(\sum\limits_{m\in\IZ^d\setminus\set{0}} h_r(n^{1/d}\hat{u}Bm)\cdot 
\abs{\mathcal{F}f(n^{1/d}\hat{u}Bm)}^2\right)
.\end{split}\]
We first prove that the first factor in this product is bounded above by a constant multiple of $n^{-2r} 
\, (\log n)^{d-1}$.\\
Consider the auxiliary set $N(\beta)=\{x\in\IR^d \mid \lfloor 2^{\beta_j-1}\rfloor \leq|x_j|<2^{\beta_j},1\leq j\leq d\}$
for $\beta\in\IN_0^d$ and $G_n^\beta=\set{m\in\IZ^d\setminus\set{0}\mid n^{1/d}\hat{u}Bm\in N(\beta)}$.
The domain $\IZ^d\setminus\set{0}$ of summation is the disjoint union of all $G_n^\beta$ over $\beta\in\IN_0^d$.\\
For $|\beta|\leq \log_2 n$, the points $x$ in $N(\beta)$ satisfy
$\prod_{j=1}^{d}\abs{x_j}<2^{\abs{\beta}}\leq n$. But the second property of the Frolov matrix $B$ yields
$\prod_{j=1}^d \abs{n^{1/d}u_j(Bm)_j}\geq n$ for any $m\in\IZ^d\setminus\set{0}$.
Hence, $G_n^\beta$ is empty for $|\beta|\leq \log_2 n$.
For $\abs{\beta} >\log_2 n$, any $m\in G_n^\beta$ satisfies
\[
h_r(n^{1/d}\hat{u}Bm)\geq \prod_{j=1}^d \left(1+\lfloor 2^{\beta_j-1}\rfloor^{2r}\right) 
\geq \prod_{j=1}^d 2^{2r(\beta_j-1)} = 2^{2r(|\beta|-d)}
\]
and hence $h_r(n^{1/d}\hat{u}Bm)^{-1}\leq 2^{2r(d-|\beta|)}$.
Because of the third property of the Frolov matrix, we obtain
\[\begin{split}
\abs{G_n^\beta} \leq \abs{\set{m\in\IZ^d\setminus\set{0}\mid \abs{(Bm)_j}
< \frac{2^{\beta_j}}{n^{1/d}}}} \leq 2^{d+\abs{\beta}} n^{-1} +1 \leq 2^{d+1+\abs{\beta}}
n^{-1}
.\end{split}\]
That yields
\[\begin{split}
\sum\limits_{m\in\IZ^d\setminus\set{0}} &h_r\left(n^{1/d}\hat{u}Bm\right)^{-1}\\
&= \sum\limits_{\beta\in\IN_0^d} \sum\limits_{m\in G_n^\beta} h_r(n^{1/d}\hat{u}Bm)^{-1}\\
&= \sum\limits_{|\beta|>\log_2 n}\, \sum\limits_{m\in G_n^\beta} h_r(n^{1/d}\hat{u}Bm)^{-1}\\
&\overset{\star}{\leq} \sum\limits_{|\beta|>\log_2 n}\, 2^{2r(d-|\beta|)}
\cdot n^{-1}\cdot 2^{d+1+|\beta|}\\
&= \sum\limits_{k=\lceil \log_2 n\rceil}^\infty\, 2^{2r(d-k)}
\cdot n^{-1}\cdot 2^{d+1+k}\cdot \abs{\set{\beta\in\IN_0^d\mid |\beta|=k}}\\
&\leq 2^{2rd+d+1} \cdot n^{-1} \sum\limits_{k=\lceil \log_2 n\rceil}^{\infty}
2^{(1-2r)k}\cdot (k+1)^{d-1}\\
&= 2^{2rd+d+1} \cdot n^{-1} 
\sum\limits_{k=0}^{\infty} 2^{(1-2r)(k+\lceil \log_2 n\rceil)}\cdot 
\left(k+1+\lceil \log_2 n\rceil\right)^{d-1}\\
&\leq 2^{2rd+d+1} \cdot n^{-1} \cdot n^{1-2r} 
\cdot \sum\limits_{k=0}^{\infty} 2^{(1-2r)k}\cdot 2^{d-1} 
\cdot (k+1)^{d-1}\cdot\lceil \log_2 n\rceil^{d-1}\\
&\leq 2^{2rd+2d} \cdot n^{-2r} 
\cdot \sum\limits_{k=0}^{\infty} 2^{(1-2r)k} (k+1)^{d-1} 
\left( 2\cdot\frac{\log n}{\log 2}\right)^{d-1}\\
&= \left( 2^{2rd+3d-1}\, (\log 2)^{1-d} 
\sum\limits_{k=0}^{\infty} \left(2^{1-2r}\right)^k (k+1)^{d-1} \right)
\cdot n^{-2r}\, (\log n)^{d-1}
.\end{split}\]
This is the desired estimate, since $2^{1-2r}<1$.\\

We now show that the second factor in the above inequality is bounded above by a constant 
multiple of $\mixnormunit{f}^2$. This proves the theorem.

For $x\in\IR^d$ we have
\[
h_r(x)\cdot \abs{\mathcal{F}f(x)}^{2}
=\sum\limits_{\alpha\in\set{0,\dots,r}^d}\abs{\mathcal{F}D^\alpha f(x)}^2
.\]
The function $g_\alpha=D^\alpha f\circ(n^{1/d}\hat{u}B)^{-\top}$ has compact support
in the parallelepiped $(n^{1/d}\hat{u}B)^\top[0,1]^d$.
Consider the set
$J_n$ of all $k\in\IZ^d$ for which $\braces{k+[0,1]^d}$ has a nonempty intersection 
with $(n^{1/d}\hat{u}B)^\top[0,1]^d$. Then
\[\begin{split}
\abs{\mathcal{F}D^\alpha f \braces{n^{1/d}\hat{u}Bm}}^2
&= \left| \int_{\IR^d} D^\alpha f(y)\cdot 
e^{-2\pi i \langle n^{1/d}\hat{u}Bm,y\rangle} \d y\right|^2\\
&= \left|\frac{1}{\det(n^{1/d}\hat{u}B)}\int_{\IR^d} g_\alpha(x)\cdot 
e^{-2\pi i\langle m,x\rangle} \d x\right|^2\\
&= \left|\frac{1}{\det(n^{1/d}\hat{u}B)}\sum\limits_{k\in J_n} \langle g_\alpha(x),
e^{2\pi i\langle m,\cdot\rangle}\rangle_{L^2\left(k+[0,1]^d\right)}\right|^2\\
&\leq \frac{\abs{J_n}}{\abs{\det(n^{1/d}\hat{u}B)}^2} \sum\limits_{k\in J_n} 
\left|\langle g_\alpha,e^{2\pi i\langle m,\cdot \rangle}\rangle_{L^2\left(k+[0,1]^d\right)}\right|^2
.\end{split}\]
Thus we obtain
\[\begin{split}
\sum\limits_{m\in\IZ^d\setminus\set{0}} &h_r(n^{1/d}\hat{u}Bm)\cdot \abs{\mathcal{F}f(n^{1/d}\hat{u}Bm)}^2
\leq \sum\limits_{m\in\IZ^d} \sum\limits_{\alpha\in\set{0,\dots,r}^d}
\abs{\mathcal{F}D^\alpha f(n^{1/d}\hat{u}Bm)}^2\\
&\leq \frac{\abs{J_n}}{\abs{\det(n^{1/d}\hat{u}B)}^2} \sum\limits_{m\in\IZ^d}
\sum\limits_{\alpha\in\set{0,\dots,r}^d}
\sum\limits_{k\in J_n} \left|\langle g_\alpha,e^{2\pi i\langle m,
\cdot \rangle}\rangle_{L^2\left(k+[0,1]^d\right)}\right|^2\\
&= \frac{\abs{J_n}}{\abs{\det(n^{1/d}\hat{u}B)}^2} \sum\limits_{\alpha\in\set{0,\dots,r}^d} 
\sum\limits_{k\in J_n} \Vert g_\alpha\Vert_{L^2\left(k+[0,1]^d\right)}^2\\
&= \frac{\abs{J_n}}{\abs{\det(n^{1/d}\hat{u}B)}^2} 
\sum\limits_{\alpha\in\set{0,\dots,r}^d} \lnorm{g_\alpha}^2\\
&= \frac{\abs{J_n}}{\abs{\det(n^{1/d}\hat{u}B)}} \sum\limits_{\alpha\in\set{0,\dots,r}^d} 
\lnorm{D^\alpha f}^2\\
&= \frac{\abs{J_n}}{\abs{\det(n^{1/d}\hat{u}B)}} \mixnormunit{f}^2
.\end{split}\]
Since both $\abs{J_n}$ and $\abs{\det(n^{1/d}\hat{u}B)}$ are of order $n$, their ratio is bounded
by a constant and the above inequality yields the statement.
\end{proof}

\begin{thm}
\label{isothmworstcase}
There is a constant $c>0$ such that for any shift parameter $v\in\IR^d$,
$n\in\IN$ and $f\in \Hisounitcomp$
\[\begin{split}
\sup\limits_{u\in [1,2^{1/d}]^d} \abs{Q_{n^{1/d}\hat{u}B}^v(f)-I_d(f)} \leq\, c \,  n^{-s/d}
\, \isonormunit{f}
.\end{split}\]
\end{thm}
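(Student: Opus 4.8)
The plan is to follow the scheme of the proof of Theorem~\ref{mixthmworstcase} line by line, replacing the mixed weight $h_r$ by the isotropic weight $v_s$ and the dyadic layering by a direct lattice-sum estimate. Fix a realization $u\in[1,2^{1/d}]^d$ and write $A=\hat u B$; note that the bound of Lemma~\ref{errorlemma} is already uniform in the shift $v$, so $v$ will play no further role. Starting from that lemma and applying the Cauchy--Schwarz inequality with the weight $v_s$, I would obtain
\[
\abs{Q_{n^{1/d}\hat u B}^v(f)-I_d(f)}^2 \leq \braces{\sum_{m\in\IZ^d\setminus\set{0}} v_s(n^{1/d}Am)^{-1}} \cdot \braces{\sum_{m\in\IZ^d\setminus\set{0}} v_s(n^{1/d}Am)\,\abs{\mathcal{F}f(n^{1/d}Am)}^2}.
\]
It then suffices to bound the first factor by a constant multiple of $n^{-2s/d}$ and the second by a constant multiple of $\isonormunit{f}^2$, both uniformly in $u$.

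For the first factor I would invoke the equivalence $v_s(x)\asymp(1+\Vert x\Vert_2^2)^s$ recorded in Section~\ref{functionclassessec}. Since $1+t^2\geq t^2$, this gives $v_s(x)^{-1}\leq c\,\Vert x\Vert_2^{-2s}$, and the dilation factor then separates cleanly:
\[
\sum_{m\in\IZ^d\setminus\set{0}} v_s(n^{1/d}Am)^{-1} \leq c\,n^{-2s/d}\sum_{m\in\IZ^d\setminus\set{0}} \Vert Am\Vert_2^{-2s}.
\]
The remaining lattice sum must be bounded uniformly in $u$. Because $u$ ranges over the compact box $[1,2^{1/d}]^d$, the inverse matrices $A^{-1}=B^{-1}\hat u^{-1}$ satisfy $\Vert A^{-1}\Vert\leq\Vert B^{-1}\Vert$ (the diagonal factor $\hat u^{-1}$ has operator norm at most $1$), so $\Vert Am\Vert_2\geq\Vert B^{-1}\Vert^{-1}\Vert m\Vert_2$ for every $m$, with a constant independent of $u$. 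Hence $\sum_{m\neq 0}\Vert Am\Vert_2^{-2s}\leq\Vert B^{-1}\Vert^{2s}\sum_{m\neq 0}\Vert m\Vert_2^{-2s}$, and the latter series converges by comparison with $\int_{\Vert x\Vert_2\geq 1}\Vert x\Vert_2^{-2s}\,\d x$ precisely because $2s>d$, i.e. because of the standing assumption $s>d/2$. This uniform convergence of the lattice zeta sum is the one genuinely new point and the step I expect to require the most care; it also explains structurally why no logarithmic factor appears here, in contrast to the dyadic estimate of the mixed case.

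For the second factor I would exploit the identity $v_s(x)\,\abs{\mathcal{F}f(x)}^2=\sum_{\abs{\alpha}\leq s}\abs{\mathcal{F}D^\alpha f(x)}^2$, which mirrors the corresponding identity for $h_r$ used in the proof of Theorem~\ref{mixthmworstcase}. After this reduction the argument is verbatim that of the mixed case, now with the index set $\set{\alpha\in\IN_0^d\mid\abs{\alpha}\leq s}$ in place of $\set{0,\dots,r}^d$: for each admissible $\alpha$ the function $g_\alpha=D^\alpha f\circ(n^{1/d}A)^{-\top}$ is supported in $(n^{1/d}A)^\top[0,1]^d$, and expanding $\abs{\mathcal{F}D^\alpha f(n^{1/d}Am)}^2$ over the integer cells $J_n$ meeting this parallelepiped yields
\[
\sum_{m\in\IZ^d\setminus\set{0}} v_s(n^{1/d}Am)\,\abs{\mathcal{F}f(n^{1/d}Am)}^2 \leq \frac{\abs{J_n}}{\abs{\det(n^{1/d}A)}}\,\isonormunit{f}^2.
\]
Since $\abs{J_n}$ and $\abs{\det(n^{1/d}A)}$ are both of order $n$ uniformly in $u$, their ratio is bounded, and combining the two factor estimates gives the claimed bound $c\,n^{-s/d}\isonormunit{f}$ uniformly over $u\in[1,2^{1/d}]^d$.
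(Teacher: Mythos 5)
Your proposal is correct and follows essentially the same route as the paper's own proof: Lemma~\ref{errorlemma} plus Cauchy--Schwarz with the weight $v_s$, the bound $v_s(n^{1/d}\hat u Bm)^{-1}\leq c\,n^{-2s/d}\Vert B^{-1}\Vert_2^{2s}\Vert m\Vert_2^{-2s}$ with the lattice zeta sum converging because $2s>d$, and the identical cell-decomposition argument for the second factor. The only cosmetic difference is that the paper bounds $v_s(x)\geq\Vert x\Vert_2^{2s}$ directly rather than passing through the equivalence $v_s(x)\asymp(1+\Vert x\Vert_2^2)^s$.
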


\begin{proof}
Let $Q_{n^{1/d}\hat{u}B}^v$ be an arbitrary realization
of the algorithm $Q_{n^{1/d}\hat{U}B}^v$ under consideration.
By Lemma~\ref{errorlemma} and Hölder's inequality,
\[\begin{split}
&\abs{Q_{n^{1/d}\hat{u}B}^v(f)-I_d(f)}^2
\leq \left(\sum\limits_{m\in\IZ^d\setminus\set{0}} 
\left| \mathcal{F}f\braces{n^{1/d}\hat{u}Bm}\right|\right)^2\\
&\leq \left(\sum\limits_{m\in\IZ^d\setminus\set{0}} v_s\braces{n^{1/d}\hat{u}Bm}^{-1}\right)
\cdot\left(\sum\limits_{m\in\IZ^d\setminus\set{0}} v_s\braces{n^{1/d}\hat{u}Bm}
\cdot \abs{\mathcal{F}f\braces{n^{1/d}\hat{u}Bm}}^2\right)
.\end{split}\]
The first factor in this product is bounded above by a constant multiple of 
$n^{-2s/d}$: Since
\[
v_s\braces{n^{1/d}\hat{u}Bm} \geq \Vert n^{1/d}\hat{u}Bm\Vert_2^{2s}
\geq n^{2s/d} \cdot \Vert Bm\Vert_2^{2s}
\geq n^{2s/d}\cdot \Vert B^{-1}\Vert_2^{-2s}\cdot
\Vert m\Vert_2^{2s}
,\]
we have
\[
\sum\limits_{m\in\IZ^d\setminus\set{0}} v_s\braces{n^{1/d}\hat{u}Bm}^{-1}
\leq n^{-2s/d}\cdot \Vert B^{-1}\Vert_2^{2s}\cdot
\sum\limits_{m\in\IZ^d\setminus\set{0}}\Vert m\Vert_2^{-2s}
,\]
where this last series converges for $2s>d$.

We show that the second factor in the above inequality is bounded above 
by a constant multiple of $\isonormunit{f}^2$. This proves the theorem.

For any $x\in\IR^d$ we have
\[
v_s(x)\cdot \abs{\mathcal{F}f(x)}^{2}
=\sum\limits_{\abs{\alpha}\leq s}\abs{\mathcal{F}D^\alpha f(x)}^2
.\]

The function $g_\alpha=D^\alpha f\circ(n^{1/d}\hat{u}B)^{-\top}$ has compact support
in the parallelepiped $(n^{1/d}\hat{u}B)^\top[0,1]^d$.
Again consider the set
$J_n$ of all $k\in\IZ^d$ for which $\braces{k+[0,1]^d}$ has a nonempty intersection 
with $(n^{1/d}\hat{u}B)^\top[0,1]^d$.\\
We have the estimate
\[\begin{split}
\abs{\mathcal{F}D^\alpha f\braces{n^{1/d}\hat{u}Bm}}^2
&= \left| \int_{\IR^d} D^\alpha f(y)\cdot
e^{-2\pi i \langle n^{1/d}\hat{u}Bm,y\rangle} \d y\right|^2\\
&= \left|\frac{1}{\det(n^{1/d}\hat{u}B)}\int_{\IR^d} g_\alpha(x)\cdot 
e^{-2\pi i\langle m,x\rangle} \d x\right|^2\\
&= \left|\frac{1}{\det(n^{1/d}\hat{u}B)}\sum\limits_{k\in J_n} 
\langle g_\alpha(x),e^{2\pi i\langle m,\cdot \rangle}\rangle_{L^2\left(k+[0,1]^d\right)}\right|^2\\
&\leq \frac{\abs{J_n}}{\abs{\det(n^{1/d}\hat{u}B)}^2} \sum\limits_{k\in J_n} 
\left|\langle g_\alpha,e^{2\pi i\langle m,\cdot \rangle}\rangle_{L^2\left(k+[0,1]^d\right)}\right|^2
.\end{split}\]
Thus we obtain
\[\begin{split}
&\sum\limits_{m\in\IZ^d\setminus\set{0}} v_s\braces{n^{1/d}\hat{u}Bm}\cdot \abs{\mathcal{F}f\braces{n^{1/d}\hat{u}Bm}}^2
\leq \sum\limits_{m\in\IZ^d} \sum\limits_{\abs{\alpha}\leq s}
\abs{\mathcal{F}D^\alpha f\braces{n^{1/d}\hat{u}Bm}}^2\\
&\leq \frac{\abs{J_n}}{\abs{\det(n^{1/d}\hat{u}B)}^2} \sum\limits_{m\in\IZ^d}
\sum\limits_{\abs{\alpha}\leq s}\,
\sum\limits_{k\in J_n} \left|\langle g_\alpha,e^{2\pi i\langle m,
\cdot \rangle}\rangle_{L^2\left(k+[0,1]^d\right)}\right|^2\\
&= \frac{\abs{J_n}}{\abs{\det(n^{1/d}\hat{u}B)}^2} \sum\limits_{\abs{\alpha}
\leq s}\,
\sum\limits_{k\in J_n} \Vert g_\alpha\Vert_{L^2\left(k+[0,1]^d\right)}^2
= \frac{\abs{J_n}}{\abs{\det(n^{1/d}\hat{u}B)}^2} \sum\limits_{\abs{\alpha}
\leq s} 
\lnorm{g_\alpha}^2\\
&= \frac{\abs{J_n}}{\abs{\det(n^{1/d}\hat{u}B)}} \sum\limits_{\abs{\alpha}
\leq s} 
\lnorm{D^\alpha f}^2
= \frac{\abs{J_n}}{\abs{\det(n^{1/d}\hat{u}B)}} \isonormunit{f}^2
.\end{split}\]
Since both $\abs{J_n}$ and $\abs{\det(n^{1/d}\hat{u}B)}$ are of order $n$, their ratio is bounded
by a constant and the above inequality yields the statement.
\end{proof}

\subsection{Expected Errors}
\label{randomdilationsection2}

In expectation, the random dilations improve the order of the error
of Frolov's algorithm by $1/2$ for
both $\Hmixunitcomp$ and $\Hisounitcomp$.
These results are based on the following general error bound for
continuous functions with compact support. Recall that
$D_n$ is the set of all $x\in\IR^d$ 
with $\prod_{j=1}^{d}\abs{x_j}\geq n$.

\begin{thm}  
\label{keyprop}
There is a constant $c>0$ such that for every $n\in\IN$, shift parameter $v\in\IR^d$ and $f\in\Cc$
\[\IE \abs{Q_{n^{1/d}\hat{U}B}^v(f)-I_d(f)} \leq\, c \,  n^{-1} \cdot \int_{D_n} \abs{\mathcal{F}f(x)}\, \d x
.\]
\end{thm}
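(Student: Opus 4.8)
The plan is to reduce everything to the deterministic error bound of Lemma~\ref{errorlemma} and then exploit the averaging over the dilation $U$. Since the right-hand side of Lemma~\ref{errorlemma} does not involve the shift, I first record that for every realization $u\in[1,2^{1/d}]^d$ one has the pointwise bound $\abs{Q_{n^{1/d}\hat{u}B}^v(f)-I_d(f)}\leq \sum_{m\in\IZ^d\setminus\set{0}}\abs{\mathcal{F}f(n^{1/d}\hat{u}Bm)}$, uniformly in $v$. Taking the expectation over $U$ and using Tonelli's theorem to interchange the nonnegative sum with the expectation, it suffices to estimate $\sum_{m\neq 0}\IE\,\abs{\mathcal{F}f(n^{1/d}\hat{U}Bm)}$.

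The second step is to evaluate each summand by a change of variables. For fixed $m\neq 0$, property~(b) of the Frolov matrix guarantees that every coordinate $(Bm)_j$ is nonzero, so the substitution $x_j=n^{1/d}u_j(Bm)_j$ is a bijection from $[1,2^{1/d}]^d$ onto an axis-parallel box $R_m$, with Jacobian $n\prod_j\abs{(Bm)_j}$. Hence $\IE\,\abs{\mathcal{F}f(n^{1/d}\hat{U}Bm)}=\braces{2^{1/d}-1}^{-d}\,\braces{n\prod_j\abs{(Bm)_j}}^{-1}\int_{R_m}\abs{\mathcal{F}f(x)}\,\d x$. Summing over $m$ recasts the problem as a single integral $\int\abs{\mathcal{F}f(x)}\,W(x)\,\d x$ against the weight $W(x)=\sum_{m:\,x\in R_m}\braces{\prod_j\abs{(Bm)_j}}^{-1}$.

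It remains to locate the boxes $R_m$ and to bound $W$. Since every $x\in R_m$ satisfies $\abs{x_j}=n^{1/d}u_j\abs{(Bm)_j}$ with $u_j\geq 1$, property~(b) gives $\prod_j\abs{x_j}\geq n\prod_j\abs{(Bm)_j}\geq n$, so $R_m\subseteq D_n$ and the integral lives on $D_n$. The crucial, and most delicate, point is that the overlap of the $R_m$ is \emph{not} uniformly bounded: the condition $x\in R_m$ says precisely that $Bm$ lies in a box $T_x$ whose $j$-th side has length $\braces{1-2^{-1/d}}\abs{x_j}/n^{1/d}$, so of volume $\braces{1-2^{-1/d}}^d\,n^{-1}\prod_j\abs{x_j}$, which grows with $\prod_j\abs{x_j}$. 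The resolution is to keep, rather than discard, the Jacobian weight: from $\prod_j u_j\leq 2$ one has $\braces{\prod_j\abs{(Bm)_j}}^{-1}\leq 2n\big/\prod_j\abs{x_j}$ on $R_m$, while property~(c) bounds the number of lattice points $Bm$ in $T_x$ by $\mathrm{vol}(T_x)+1$. Multiplying these two estimates, the growing factor $\prod_j\abs{x_j}$ cancels, and using $\prod_j\abs{x_j}\geq n$ on $D_n$ one obtains $W(x)\leq 2\braces{1-2^{-1/d}}^d+2\leq 4$.

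Assembling the pieces yields $\IE\,\abs{Q_{n^{1/d}\hat{U}B}^v(f)-I_d(f)}\leq 4\braces{2^{1/d}-1}^{-d}\,n^{-1}\int_{D_n}\abs{\mathcal{F}f(x)}\,\d x$, which is the claim with $c=4\braces{2^{1/d}-1}^{-d}$. I expect the main obstacle to be exactly the weight estimate in the third step: the tempting route is to prove a plain bounded-overlap property for the $R_m$, which is false, since the number of boxes covering a point deep inside $D_n$ is unbounded. The correct observation is that the Jacobian weight decays at precisely the rate needed to offset this growing multiplicity, and it is the joint use of Frolov properties (b) and (c) that makes the two effects cancel to a constant.
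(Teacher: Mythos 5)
Your proposal is correct and follows essentially the same route as the paper's proof: Lemma~\ref{errorlemma} plus monotone convergence, the observation that $n^{1/d}\hat{U}Bm$ is uniform in the box $[n^{1/d}Bm,(2n)^{1/d}Bm]$, and the joint use of Frolov properties (b) and (c) to show the Jacobian weight cancels the growing multiplicity of overlapping boxes, yielding the same constant $4\,(2^{1/d}-1)^{-d}$. Your explicit discussion of why a plain bounded-overlap argument fails is a nice expository addition, but the underlying computation is identical to the paper's.
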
 

\begin{proof}
Thanks to Lemma~\ref{errorlemma} 
and the monotone convergence theorem we have
\[\begin{split}
\IE \abs{Q_{n^{1/d}\hat{U}B}^v(f)-I_d(f)}
&\leq \IE 
\left(\sum\limits_{m\in\IZ^d\setminus\set{0}} \abs{\mathcal{F}f\braces{n^{1/d}\hat{U}Bm}} \right)\\
&= \sum\limits_{m\in\IZ^d\setminus\set{0}} \IE \abs{\mathcal{F}f\braces{n^{1/d}\hat{U}Bm}}
.\end{split}\]
Since each $n^{1/d}\hat{U}Bm$ is uniformly distributed in the 
box $[n^{1/d}Bm,(2n)^{1/d}Bm]$ with volume 
$\left(2^{1/d}-1\right)^d\cdot\abs{\prod_{j=1}^dn^{1/d} (Bm)_j}$, this series equals
\[\begin{split}
&\frac{1}{\left(2^{1/d}-1\right)^d} \sum\limits_{m\in\IZ^d\setminus\set{0}}\, 
\int\limits_{[n^{1/d}Bm,(2n)^{1/d}Bm]} \frac{\abs{\mathcal{F}f(x)}}{\prod_{j=1}^d
\abs{n^{1/d}(Bm)_j}} \, \d x \\
&\leq \frac{1}{\left(2^{1/d}-1\right)^d} 
\sum\limits_{m\in\IZ^d\setminus\set{0}}\, \int\limits_{[n^{1/d}Bm,(2n)^{1/d}Bm]} 
\frac{\abs{\mathcal{F}f(x)}}{\prod_{j=1}^d 2^{-1/d}\abs{x_j}} \, \d x\\
&= \frac{2}{\left(2^{1/d}-1\right)^d}\cdot \int_{\IR^d} 
\frac{\abs{\mathcal{F}f(x)}}{\prod_{j=1}^d \abs{x_j}}\cdot 
\abs{\set{m\in\IZ^d\setminus\set{0}\mid x\in [n^{1/d}Bm,(2n)^{1/d}Bm]}} \, \d x
\end{split}\]
\[\begin{split}
&= \frac{2}{\left(2^{1/d}-1\right)^d}\cdot \int_{\IR^d} 
\frac{\abs{\mathcal{F}f(x)}}{\prod_{j=1}^d \abs{x_j}}\cdot 
\abs{\set{m\in\IZ^d\setminus\set{0}\mid Bm\in 
\left[\frac{x}{(2n)^{1/d}},\frac{x}{n^{1/d}}\right]}}\, \d x  
.\end{split}\]
Thanks to the properties of the Frolov matrix $B$, if $\prod_{j=1}^d \abs{x_j}<n$, 
the latter set is empty and otherwise contains no more 
than $\prod_{j=1}^d \abs{\frac{x_j}{n^{1/d}}}+1\leq 2 n^{-1} \prod_{j=1}^d \abs{x_j}$ points.
Thus, we arrive at
\[
\IE \abs{Q_{n^{1/d}\hat{U}B}^v(f)-I_d(f)} \leq
\frac{4}{\left(2^{1/d}-1\right)^d}\cdot n^{-1} \int_{D_n} \abs{\mathcal{F}f(x)} \, \d x 
\]
and the theorem is proven.
\end{proof}

Additional differentiability properties of the function $f\in\Cc$ result in decay 
properties of its Fourier transform $\mathcal{F}f$. This leads to estimates of the 
integral $\int_{D_n} \abs{\mathcal{F}f(x)} \, \d x$.
Hence, the general upper bound for the error of $Q_{n^{1/d}\hat{U}B}^v(f)$ in Theorem
\ref{keyprop} adjusts to the differentiability of $f$.
Two such examples are functions from $\Hmixcomp$ and $\Hisocomp$.

\begin{lemma}
\label{intlemmamix}
There is some $c>0$ such that
for each $n\geq 2$ and $f\in\Hmixunitcomp$
\[
\int_{D_n} \abs{\mathcal{F}f(x)} \, \d x  \leq c \,  n^{-r+1/2}
\, \left(\log n\right)^{\frac{d-1}{2}} \, \mixnormunit{f}
.\]
\end{lemma}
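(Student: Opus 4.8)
The plan is to decouple the oscillation of $\mathcal{F}f$ from the geometry of the domain $D_n$ by a single Cauchy--Schwarz inequality, inserting the weight $h_r$ that defines the norm. Writing $\abs{\mathcal{F}f(x)}=\abs{\mathcal{F}f(x)}\,h_r(x)^{1/2}\cdot h_r(x)^{-1/2}$, I would estimate
\[
\int_{D_n}\abs{\mathcal{F}f(x)}\,\d x
\leq \braces{\int_{D_n}\abs{\mathcal{F}f(x)}^2\,h_r(x)\,\d x}^{1/2}
\braces{\int_{D_n}h_r(x)^{-1}\,\d x}^{1/2}.
\]
The first factor is controlled at once: enlarging the domain from $D_n$ to all of $\IR^d$ and using the Fourier description of the norm from Section~\ref{functionclassessec} gives $\int_{D_n}\abs{\mathcal{F}f}^2 h_r\leq\mixnorm{f}^2$, and since $f$ is supported in the open unit cube every derivative $D^\alpha f$ vanishes outside $[0,1]^d$, so $\mixnorm{f}=\mixnormunit{f}$. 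Hence the whole statement reduces to the deterministic weight estimate $\int_{D_n}h_r(x)^{-1}\,\d x\leq c\,n^{-2r+1}(\log n)^{d-1}$, whose square root produces exactly the claimed rate.

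For that remaining integral I would first replace $h_r$ by a product of one–dimensional weights. From $h_r(x)=\prod_{j=1}^d\sum_{k=0}^r\abs{2\pi x_j}^{2k}$ one has $h_r(x)\geq\prod_{j=1}^d\max\braces{1,\abs{2\pi x_j}^{2r}}$, so that $h_r(x)^{-1}\leq\prod_{j=1}^d\min\braces{1,\abs{2\pi x_j}^{-2r}}$; by symmetry it suffices to integrate over the positive orthant and multiply by $2^d$. The decisive quantity is then $\int_{\set{x_j>0,\ \prod_j x_j\geq n}}\prod_{j=1}^d\min\braces{1,x_j^{-2r}}\,\d x$, which I would evaluate by a dyadic decomposition of the orthant into boxes $Q_\beta=\prod_{j=1}^d[2^{\beta_j},2^{\beta_j+1})$ indexed by $\beta\in\IZ^d$. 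This is the continuous analogue of the discrete computation in the proof of Theorem~\ref{mixthmworstcase}: on each $Q_\beta$ the integrand and the product $\prod_j x_j\asymp 2^{\abs\beta}$ are essentially constant, the volume of $Q_\beta$ equals $2^{\abs\beta}$, and the constraint $\prod_j x_j\geq n$ removes every block with $\abs\beta<\log_2 n$.

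The core is the weighted summation over the surviving blocks. Coordinates with $\beta_j\geq 0$ contribute a factor $2^{-2r\beta_j}\cdot 2^{\beta_j}=2^{(1-2r)\beta_j}$ (decaying since $r\geq 1$), while coordinates with $\beta_j<0$ contribute $1\cdot 2^{\beta_j}$, which is summable toward $-\infty$. Grouping the sum by $k=\abs\beta\geq\lceil\log_2 n\rceil$, the dominant contribution comes from the blocks with all $\beta_j\geq 0$: for such blocks $\prod_j 2^{(1-2r)\beta_j}=2^{(1-2r)k}$ is independent of the particular split, and the number of $\beta\in\IN_0^d$ with $\abs\beta=k$ grows only like $(k+1)^{d-1}$. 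This gives the same geometric tail as in Theorem~\ref{mixthmworstcase}, namely $\sum_{k\geq\lceil\log_2 n\rceil}2^{(1-2r)k}(k+1)^{d-1}\asymp n^{1-2r}(\log n)^{d-1}$, but now without the lattice-density factor $n^{-1}$ present in the discrete case, which is precisely what shifts the exponent from $n^{-2r}$ to $n^{-2r+1}$. Taking the square root and combining with the norm factor yields $c\,n^{-r+1/2}(\log n)^{(d-1)/2}\mixnormunit{f}$.

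The main obstacle I anticipate is the bookkeeping for the blocks that straddle the axes, i.e. those with some $\beta_j<0$, where the weight $\min\braces{1,x_j^{-2r}}$ no longer decays. One must check that these directions only inflate the constant and do not raise the power of $\log n$: swapping a negative exponent $\beta_j=-t$ for an extra $+t$ on another coordinate changes the block weight by the factor $(2^{1-2r}\cdot 2^{-1})^t=(2^{-2r})^t<1$, so negative components are geometrically suppressed and the $x_j<1$ directions sum to a bounded constant. Verifying this cleanly is what guarantees that the combinatorial count $(k+1)^{d-1}$, and hence the exponent $(d-1)/2$ of the logarithm, is the correct one.
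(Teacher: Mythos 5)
Your proposal is correct and follows essentially the same route as the paper: the identical Cauchy--Schwarz step with the weight $h_r$ reduces everything to $\int_{D_n}h_r(x)^{-1}\,\d x\lesssim n^{1-2r}(\log n)^{d-1}$, which both you and the paper establish by a dyadic decomposition of $D_n$ and the count of compositions of $k=\abs{\beta}$ into $d$ parts giving the $(k+1)^{d-1}$ factor. The only (cosmetic) difference is that the paper routes the weight integral through the substitution $x=n^{1/d}By$ and the cells $N(\beta)$ with $\beta\in\IN_0^d$, whose $\beta_j=0$ layer absorbs the region $\abs{x_j}<1$ outright, whereas your $\IZ^d$-indexed blocks require the separate observation that negative indices are suppressed by the factor $2^{-2rt}$ --- an observation you state and justify correctly.
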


\begin{proof}
Applying Hölder's inequality and a linear substitution $x=n^{1/d}By$ 
to the above integral, we get
\[\begin{split}
&\left( \int_{D_n} \abs{\mathcal{F}f(x)} \, \d x  \right)^2
= \left( \int_{D_n} h_r(x)^{-1/2} \cdot
\abs{\mathcal{F}f(x)} h_r(x)^{1/2}\, \d x  \right)^2\\
&\leq \braces{\int_{D_n} h_r(x)^{-1} \, \d x} \mixnorm{f}^2
=  n \abs{\det B} \left(\int_{G} h_r(n^{1/d}By)^{-1} \, \d y \right) \mixnorm{f}^2
\end{split}\]
with $G=B^{-1}D_1$ being the set of all $y\in\IR^d$ with 
$\prod_{j=1}^{d}\abs{(By)_j}\geq 1$. It it thus sufficient to prove that the 
integral $\int_{G} h_r(n^{1/d}By)^{-1} \, \d y$ is bounded by a constant 
multiple of $n^{-2r}\, \left(\log n\right)^{d-1}$.\\
Again consider the auxiliary set $N(\beta)=\{x\in\IR^d \mid \left[2^{\beta_j-1}\right]
\leq|x_j|<2^{\beta_j},1\leq j\leq d\}$ for $\beta\in\IN_0^d$
and the set $G_n^\beta=\set{y\in G\mid n^{1/d}By\in N(\beta)}$.
Similar to the proof of Theorem \ref{mixthmworstcase},
the domain $G$ of integration is the disjoint union of all $G_n^\beta$ over $\beta\in\IN_0^d$,
where $G_n^\beta=\emptyset$, if $\abs{\beta}\leq \log_2 n$, and otherwise
the integrand is bounded above by $2^{2r(d-\abs{\beta})}$ for $y\in G_n^\beta$.
On the other hand,
\[\begin{split}
&\lambda^d(G_n^\beta)\leq \lambda^d\left((n^{1/d}B)^{-1}N(\beta)\right) 
= n^{-1}\cdot |\det B|^{-1}\cdot \lambda^d(N(\beta)) \\
&= n^{-1}\cdot |\det B|^{-1}\cdot 2^d \cdot \prod_{j=1}^d\left(2^{\beta_j}-
\left[2^{\beta_j-1}\right]\right)
\leq n^{-1}\cdot |\det B|^{-1}\cdot 2^d\cdot 2^{|\beta|}
.\end{split}\]
Like in the proof of Theorem \ref{mixthmworstcase}, we obtain
\[\begin{split}
&\int\limits_{G} h_r(n^{1/d}By)^{-1} \, \d y 
= \sum\limits_{\beta\in\IN_0^d}\, \int_{G_n^\beta} h_r(n^{1/d}By)^{-1} \, \d y \\
&= \sum\limits_{|\beta|>\log_2 n}\ \int_{G_n^\beta} h_r(n^{1/d}By)^{-1} \, \d y \\
&\leq \sum\limits_{|\beta|>\log_2 n} 2^{2r(d-|\beta|)}
\cdot n^{-1}\cdot |\det B|^{-1}\cdot 2^{d+\beta}\\
&= |\det B|^{-1}\cdot 2^{-1}\cdot \sum\limits_{|\beta|>\log_2 n} 2^{2r(d-|\beta|)}
\cdot n^{-1}\cdot 2^{d+1+\abs{\beta}}\\
&\overset{\star}{\leq}
\left( 2^{2rd+3d-2} |\det B|^{-1} (\log 2)^{1-d} 
\sum\limits_{k=0}^{\infty} \left(2^{1-2r}\right)^k (k+1)^{d-1} \right)
\, n^{-2r}\, (\log n)^{d-1}
,\end{split}\]
where the constant is finite, since $2^{1-2r}<1$.
\end{proof}

Combining Theorem~\ref{keyprop} and Lemma~\ref{intlemmamix} yields:

\begin{thm}
\label{mixthm}
There is a constant $c>0$ such that 
for every $n\geq 2$, shift parameter $v\in\IR^d$ and $f\in \Hmixunitcomp$
\[
\IE \abs{Q_{n^{1/d}\hat{U}B}^v(f)-I_d(f)} \leq\, c \, n^{-r-1/2} 
\, (\log n)^\frac{d-1}{2} \, \mixnormunit{f}
.\]
\end{thm}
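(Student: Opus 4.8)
The plan is essentially to combine the two results that immediately precede the statement. Theorem~\ref{keyprop} gives the general bound
\[
\IE \abs{Q_{n^{1/d}\hat{U}B}^v(f)-I_d(f)} \leq c \, n^{-1} \int_{D_n} \abs{\mathcal{F}f(x)}\, \d x,
\]
valid for any $f\in\Cc$, while Lemma~\ref{intlemmamix} controls the integral on the right for functions of dominating mixed smoothness, giving
\[
\int_{D_n} \abs{\mathcal{F}f(x)} \, \d x \leq c' \, n^{-r+1/2} \, (\log n)^{\frac{d-1}{2}} \, \mixnormunit{f}.
\]
Since $\Hmixunitcomp$ is a subspace of $\Cc$, Theorem~\ref{keyprop} applies to every $f\in\Hmixunitcomp$, so I would simply chain the two estimates together.

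First I would fix $n\geq 2$, a shift parameter $v\in\IR^d$, and $f\in\Hmixunitcomp$, and invoke Theorem~\ref{keyprop} to bound the expected error by $c\,n^{-1}\int_{D_n}\abs{\mathcal{F}f(x)}\,\d x$. Then I would substitute the bound from Lemma~\ref{intlemmamix} for that integral. Multiplying the factor $n^{-1}$ by $n^{-r+1/2}$ collapses the powers of $n$ to $n^{-r-1/2}$, while the logarithmic factor $(\log n)^{(d-1)/2}$ and the norm $\mixnormunit{f}$ carry through unchanged. Absorbing the two constants $c$ and $c'$ into a single new constant yields exactly
\[
\IE \abs{Q_{n^{1/d}\hat{U}B}^v(f)-I_d(f)} \leq c \, n^{-r-1/2} \, (\log n)^{\frac{d-1}{2}} \, \mixnormunit{f},
\]
which is the claim. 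The constant is uniform in $v$ because both ingredients are uniform in $v$.

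There is essentially no obstacle here: the theorem is a direct corollary, and all the real work has already been carried out in the proofs of Theorem~\ref{keyprop} (the probabilistic averaging over the random dilation $U$, which converts the discrete lattice sum into an integral over $D_n$ and produces the crucial extra factor $n^{-1}$) and Lemma~\ref{intlemmamix} (the Hölder estimate together with the dyadic decomposition of $D_n$ and the counting argument using properties (b) and (c) of the Frolov matrix). The only point worth a moment's care is bookkeeping of the exponents of $n$, to confirm that the gain of $1/2$ over the worst-case rate of Theorem~\ref{mixthmworstcase} comes precisely from the $n^{-1}$ in Theorem~\ref{keyprop} meeting the $n^{+1/2}$ improvement in the Fourier integral bound. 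Since everything is an elementary multiplication, I would present the argument in a single short paragraph.
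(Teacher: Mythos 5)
Your proposal is correct and is exactly the paper's argument: the paper states Theorem~\ref{mixthm} as an immediate consequence of combining Theorem~\ref{keyprop} with Lemma~\ref{intlemmamix}, with no further proof given. The exponent bookkeeping ($n^{-1}\cdot n^{-r+1/2}=n^{-r-1/2}$) and the uniformity in $v$ are handled just as you describe.
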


If, however, the integrand is from the space $\Hisounitcomp\subseteq\Cc$, 
the following lemma holds.

\begin{lemma}
\label{intlemmaiso}
There is some $c>0$
such that for each $n\in\IN$ and $f\in\Hisounitcomp$
\[
\int_{D_n} \abs{\mathcal{F}f(x)} \, \d x  \leq c \,  n^{-s/d+1/2} \, \isonormunit{f}
.\]
\end{lemma}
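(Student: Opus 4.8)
The plan is to mirror the proof of Lemma~\ref{intlemmamix}, replacing the mixed weight $h_r$ by the isotropic weight $v_s$. First I would split the integrand by Hölder's inequality, writing $\abs{\mathcal{F}f(x)}=v_s(x)^{-1/2}\cdot\abs{\mathcal{F}f(x)}\,v_s(x)^{1/2}$, to obtain
\[
\braces{\int_{D_n}\abs{\mathcal{F}f(x)}\,\d x}^2 \leq \braces{\int_{D_n} v_s(x)^{-1}\,\d x}\cdot\isonorm{f}^2 .
\]
Here the second factor is exactly the Fourier representation of the $\Hiso$-norm, and since $f$ has compact support in the unit cube this equals $\isonormunit{f}^2$. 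It therefore remains to show that $\int_{D_n} v_s(x)^{-1}\,\d x$ is bounded by a constant multiple of $n^{-2s/d+1}$, after which taking square roots produces the claimed bound $c\,n^{-s/d+1/2}\isonormunit{f}$.

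For the weight integral I would use the equivalence $v_s(x)\asymp\braces{1+\Vert x\Vert_2^2}^s$ recorded in Section~\ref{functionclassessec}, which gives $v_s(x)^{-1}\leq c_0\,\Vert x\Vert_2^{-2s}$ on $D_n$ for some constant $c_0$. The decisive step is the same linear substitution $x=n^{1/d}By$ — or, even more simply, $x=n^{1/d}y$ — already used in Lemma~\ref{intlemmamix}: its Jacobian contributes a factor $n$, it sends $D_n$ onto $D_1=\set{y\in\IR^d\mid\prod_{j=1}^d\abs{y_j}\geq 1}$ because $\prod_{j=1}^d\abs{x_j}=n\prod_{j=1}^d\abs{y_j}$, and it turns $\Vert x\Vert_2^{-2s}$ into $n^{-2s/d}\Vert y\Vert_2^{-2s}$. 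This yields
\[
\int_{D_n} v_s(x)^{-1}\,\d x \leq c_0\,n^{1-2s/d}\int_{D_1}\Vert y\Vert_2^{-2s}\,\d y ,
\]
so everything reduces to proving that the $n$-independent integral $\int_{D_1}\Vert y\Vert_2^{-2s}\,\d y$ is finite.

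I expect this last convergence to be the only genuine obstacle, and it is precisely where the standing assumption $s>d/2$ must enter. The key observation is that the arithmetic--geometric mean inequality forces $\Vert y\Vert_2^2=\sum_{j=1}^d y_j^2\geq d\braces{\prod_{j=1}^d y_j^2}^{1/d}\geq d$ for every $y\in D_1$, so that $D_1$ is contained in the exterior region $\set{y\in\IR^d\mid\Vert y\Vert_2\geq\sqrt d}$. Consequently
\[
\int_{D_1}\Vert y\Vert_2^{-2s}\,\d y \leq \int_{\Vert y\Vert_2\geq\sqrt d}\Vert y\Vert_2^{-2s}\,\d y ,
\]
and passing to polar coordinates reduces the right-hand side to a constant multiple of $\int_{\sqrt d}^{\infty}\rho^{\,d-1-2s}\,\d\rho$, which converges exactly because $d-1-2s<-1$, i.e.\ $2s>d$. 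Combining the three displayed estimates gives $\int_{D_n}\abs{\mathcal{F}f(x)}\,\d x\leq c\,n^{-s/d+1/2}\isonormunit{f}$ with a finite constant $c$, as claimed.
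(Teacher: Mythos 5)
Your proposal is correct and follows essentially the same route as the paper: H\"older's inequality against the weight $v_s$, the equivalence $v_s(x)\asymp(1+\Vert x\Vert_2^2)^s$, containment of $D_n$ in the exterior of a ball of radius comparable to $n^{1/d}$, and polar coordinates with convergence from $2s>d$. The only cosmetic difference is that you rescale to $D_1$ and invoke AM--GM, whereas the paper works directly on $D_n$ via $\Vert x\Vert_2\geq\max_j\abs{x_j}\geq n^{1/d}$; both give the same bound.
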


\begin{proof}
Like in Lemma~\ref{intlemmamix}, we apply Hölder's inequality and get
\[\begin{split}
&\left( \int_{D_n} \abs{\mathcal{F}f(x)} \, \d x  \right)^2
=\left( \int_{D_n} \abs{\mathcal{F}f(x)} v_s(x)^{1/2} 
\cdot v_s(x)^{-1/2} \, \d x  \right)^2\\
&\leq \left(\int_{D_n} v_s(x)^{-1} \, \d x \right)\cdot \isonorm{f}^2
\leq \tilde{c}\cdot \left(\int_{D_n} \left(1+\Vert x\Vert_2^2
\right)^{-s} \, \d x \right)\cdot \isonormunit{f}^2
,\end{split}\]
for some $\tilde{c}>0$. 
Since $\Vert x\Vert_2\geq \max\set{\abs{x_j}\mid j=1,\dots,n}\geq n^{1/d}$ for $x\in D_n$, 
the set $D_n$ is a subset of $\set{x\in\IR^d:\,\Vert x\Vert_2\geq n^{1/d}}$
and the latter integral in the above integral is less than
\[\begin{split}
&\int\limits_{\set{x\in\IR^d:\,\Vert x\Vert_2\geq n^{1/d}}} 
\left(1+\Vert x\Vert_2^2\right)^{-s} \, \d x
= \int\limits_{n^{1/d}}^{\infty}\int\limits_{S_{d-1}} 
\braces{1+R^2}^{-s}\cdot R^{d-1} \, \d \sigma \, \d R \\
&= \sigma\left(S_{d-1}\right) \int\limits_{n^{1/d}}^{\infty} 
\braces{1+R^2}^{-s}\cdot R^{d-1} \, \d R 
\leq \sigma\left(S_{d-1}\right) \int\limits_{n^{1/d}}^{\infty} R^{-2s+d-1} \, \d R 
\leq \hat{c} \cdot n^{-2s/d+1}
,\end{split}\]
for some $\hat{c}>0$, since $-2s+d-1<-1$.
\end{proof}

In this case, combining Theorem~\ref{keyprop} and Lemma~\ref{intlemmaiso} yields:

\begin{thm}
\label{isothm}
There is a constant $c>0$ such that for 
every $n\in\IN$, shift parameter $v\in\IR^d$ and $f\in \Hisounitcomp$
\[\begin{split}
\IE \abs{Q_{n^{1/d}\hat{U}B}^v(f)-I_d(f)} \leq\, c \,  n^{-s/d-1/2} \, \isonormunit{f}
.\end{split}\]
\end{thm}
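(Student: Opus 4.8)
The plan is simply to chain the two results that immediately precede the statement. Since $\Hisounitcomp\subseteq\Cc$, Theorem~\ref{keyprop} applies verbatim to any $f\in\Hisounitcomp$, giving a constant $c_1>0$ for which
\[
\IE \abs{Q_{n^{1/d}\hat{U}B}^v(f)-I_d(f)} \leq c_1 \, n^{-1} \int_{D_n} \abs{\mathcal{F}f(x)}\, \d x
\]
holds for every $n\in\IN$ and every shift parameter $v\in\IR^d$. The only remaining quantity is the integral of $\abs{\mathcal{F}f}$ over the complement $D_n$ of the hyperbolic cross, and this is precisely what Lemma~\ref{intlemmaiso} estimates.

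I would therefore insert the bound of Lemma~\ref{intlemmaiso}, namely $\int_{D_n}\abs{\mathcal{F}f(x)}\,\d x\leq c_2\,n^{-s/d+1/2}\,\isonormunit{f}$, into the displayed inequality. Collecting the powers of $n$ gives
\[
\IE \abs{Q_{n^{1/d}\hat{U}B}^v(f)-I_d(f)} \leq c_1 c_2\, n^{-1}\cdot n^{-s/d+1/2}\,\isonormunit{f} = c_1 c_2\, n^{-s/d-1/2}\,\isonormunit{f},
\]
which is the asserted bound with $c=c_1 c_2$. Both constants are independent of $v$, so the estimate is uniform in the shift parameter exactly as claimed.

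There is no genuine obstacle in this last step; all of the difficulty has already been absorbed into the two ingredients. The gain of the full factor $n^{-1}$ in Theorem~\ref{keyprop} --- rather than the $n^{-1/2}$ a deterministic rule would lose --- comes from the uniform distribution of each dilated node $n^{1/d}\hat{U}Bm$ over a box of volume proportional to $\prod_{j=1}^d\abs{n^{1/d}(Bm)_j}\geq n$, which converts the lattice sum into an integral against the weight $\prod_j\abs{x_j}^{-1}$. The smoothness of $f$ enters only through Lemma~\ref{intlemmaiso}, where the radial decay $v_s(x)^{-1}\asymp(1+\Vert x\Vert_2^2)^{-s}$, the inclusion $D_n\subseteq\set{x\in\IR^d:\Vert x\Vert_2\geq n^{1/d}}$, and polar coordinates together produce the exponent $-s/d+1/2$. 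Thus the result follows by arithmetic from two already-proven estimates.
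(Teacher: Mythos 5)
Your proposal is correct and is exactly the paper's own argument: the text introduces Theorem~\ref{isothm} with the words ``combining Theorem~\ref{keyprop} and Lemma~\ref{intlemmaiso} yields,'' which is precisely the chaining of the two bounds you carry out. The arithmetic $n^{-1}\cdot n^{-s/d+1/2}=n^{-s/d-1/2}$ and the uniformity in $v$ are handled correctly, so nothing is missing.
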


We remark that the Frolov properties of the matrix $B$ are not needed to get this
estimate on $\Hisounitcomp$, although they are essential for the upper bound
on $\Hmixunitcomp$ from Theorem~\ref{mixthm}. As seen in the proof of Lemma~\ref{intlemmaiso}
and contrarily to Lemma~\ref{intlemmamix},
we do not need that the lattice points of $n^{1/d}B\IZ^d\setminus\set{0}$
lie in $D_n$, but only that they lie in the 
bigger set $\set{x\in\IR^d\mid \Vert x\Vert_2\geq n^{1/d}}$. For example, the 
identity matrix would do. But if $B$ is a Frolov matrix, $Q_{n^{1/d}\hat{u}B}^v$ works 
universally for $\Hmixcomp$ and $\Hisocomp$. Furthermore, the Frolov properties
of $B$ prevent extremely large jumps of the number of nodes of
$Q_{n^{1/d}\hat{u}B}^v=Q_{n^{1/d}\hat{u}B}^v$ for small changes of 
the dilation parameter $u\in[1,2^{1/d}]^d$.

\newpage
\section{Further Improvements through Random Shifts}
\label{randomshiftsection}

Now we also choose the
shift parameter $v$ in $Q_{n^{1/d}\hat{U}B}^v$ at random.

\medskip 
\noindent
{\bf Algorithm.} \
For any  natural number $n$
we consider the method $Q_{n^{1/d}\hat{U}B}^V$
from Section \ref{basicquadrulesection}
with independent dilation parameter $U$, uniformly distributed in $[1,2^{1/d}]^d$,
and shift parameter $V$, uniformly distributed in $[0,1]^d$.
\medskip 

For input functions $f$ from $\Hisounitcomp$ or $\Hmixunitcomp$ the
information cost of the method $Q_{n^{1/d}\hat{U}B}^V$ is again of order $n$.\\

The first advantage of this method is its unbiasedness.

\begin{prop}
\label{Munbiased}
Let $S\in\IR^{d\times d}$ be an
invertible matrix. For any $f\in L^1\braces{\IR^d}$, the method $Q_S^V$ satisfies
\[
\IE \braces{Q_S^V(f)}=I_d(f)
.\]
In particular, the method $Q_{n^{1/d}\hat{U}B}^V$ is well-defined and unbiased on $L^1\braces{\IR^d}$.
\end{prop}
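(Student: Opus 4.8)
The plan is to compute the expectation directly from the definition. Since $V$ is uniformly distributed on $[0,1]^d$, we have
\[
\IE\braces{Q_S^V(f)} = \int_{[0,1]^d} \frac{1}{\abs{\det S}} \sum_{m\in\IZ^d} f\braces{S^{-\top}(m+v)} \, \d v,
\]
and the whole task reduces to evaluating this sum--integral. The guiding observation is that the translates $m+[0,1]^d$, $m\in\IZ^d$, tile $\IR^d$ up to a null set. Hence, once we are allowed to pull the integral inside the sum, each summand $\int_{[0,1]^d} f\braces{S^{-\top}(m+v)}\,\d v$ becomes $\int_{m+[0,1]^d} f\braces{S^{-\top}w}\,\d w$ after the substitution $w = m+v$, and summing over $m$ reassembles the single integral $\int_{\IR^d} f\braces{S^{-\top}w}\,\d w$ over all of $\IR^d$.

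From there the computation is routine: the linear substitution $x = S^{-\top}w$ has Jacobian $\abs{\det S^{-\top}} = \abs{\det S}^{-1}$, so $\int_{\IR^d} f\braces{S^{-\top}w}\,\d w = \abs{\det S}\int_{\IR^d} f(x)\,\d x = \abs{\det S}\cdot I_d(f)$. The prefactor $\abs{\det S}^{-1}$ from the definition of $Q_S^V$ cancels the Jacobian factor exactly, leaving $\IE\braces{Q_S^V(f)} = I_d(f)$, as claimed.

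The one genuine obstacle is justifying the interchange of the (infinite) sum and the integral, because $f\in L^1\braces{\IR^d}$ need not be nonnegative and an individual realization $Q_S^v(f)$ is a priori only a formal series. I would dispose of this by first running the entire argument above for $\abs{f}$ in place of $f$: Tonelli's theorem applies to the nonnegative integrand, and the same tiling-and-substitution chain gives $\int_{[0,1]^d}\sum_{m\in\IZ^d}\abs{f\braces{S^{-\top}(m+v)}}\,\d v = \abs{\det S}\,\Vert f\Vert_{L^1(\IR^d)} < \infty$. This finiteness does double duty: it shows that the defining series of $Q_S^v(f)$ converges absolutely for almost every $v$, so that $Q_S^V(f)$ is a well-defined random variable, and it licenses Fubini's theorem, which is exactly what permits the sum--integral interchange for the signed function $f$.

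Finally, for the ``in particular'' claim about $Q_{n^{1/d}\hat{U}B}^V$, I would apply the statement conditionally on the dilation parameter. For each fixed $u\in[1,2^{1/d}]^d$ the matrix $S = n^{1/d}\hat{u}B$ is invertible, so the result just proven yields $\IE_V\braces{Q_{n^{1/d}\hat{u}B}^V(f)} = I_d(f)$ independently of $u$; since $U$ and $V$ are independent, the tower property (or Fubini once more, using the integrability established above) gives $\IE\braces{Q_{n^{1/d}\hat{U}B}^V(f)} = I_d(f)$, and well-definedness holds for almost every pair $(u,v)$ for the same reason.
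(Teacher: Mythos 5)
Your proposal is correct and follows essentially the same route as the paper: both first run the computation for $\abs{f}$ (Tonelli/monotone convergence plus the tiling of $\IR^d$ by the translated cells and the Jacobian cancellation) to get $\abs{\det S}^{-1}$ times $\Vert f\Vert_{L^1(\IR^d)}$ rescaled back to $\Vert f\Vert_{L^1(\IR^d)}$, which simultaneously establishes almost sure absolute convergence and licenses the interchange (Fubini in your write-up, dominated convergence in the paper's) for the signed $f$. The treatment of the ``in particular'' part via independence of $U$ and $V$ and the tower property is likewise the paper's argument.
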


\begin{proof}
By the monotone convergence theorem,
\[\begin{split}
&\IE \braces{\sum\limits_{m\in\IZ^d} \frac{1}{\abs{\det S}} 
\abs{f\left( S^{-\top}(m+V) \right)}}
=\sum\limits_{m\in\IZ^d} \IE \braces{\frac{1}{\abs{\det S}} 
\abs{f\left( S^{-\top}(m+V) \right)}} \\
&= \sum\limits_{m\in\IZ^d} \frac{1}{\abs{\det S}} 
\int_{[0,1]^d} \abs{f\left( S^{-\top}(m+x) \right)} \, \d x \\
&= \sum\limits_{m\in\IZ^d}\ \int_{S^{-\top}\braces{m+[0,1]^d}} 
\abs{f(y)} \, \d y
= \int_{\IR^d} \abs{f(y)} \, \d y\,
< \infty
.\end{split}\]
The series $Q_S^V(f)= \sum\limits_{m\in\IZ^d} \frac{1}{\abs{\det S}} 
f\left( S^{-\top}(m+V) \right)$ hence converges absolutely almost surely
and is dominated by the integrable function
$\sum\limits_{m\in\IZ^d} \frac{1}{\abs{\det S}} 
\abs{f\left( S^{-\top}(m+V) \right)}$.
We can thus apply Lebesgue's dominated convergence theorem to get
\[\begin{split}
\IE \braces{Q_{S}^V(f)}
&= \sum\limits_{m\in\IZ^d} \frac{1}{\abs{\det S}} 
\int_{[0,1]^d} f\left( S^{-\top}(m+x) \right) \, \d x\\
&= \sum\limits_{m\in\IZ^d}\ \int_{S^{-\top}\braces{m+[0,1]^d}} 
f(y) \, \d y
= \int_{\IR^d} f(y) \, \d y
= I_d(f)
,\end{split}\]
for the expected value of the general algorithm at $f$.\\
For the method $Q_{n^{1/d}\hat{U}B}^V$, Fubini's theorem yields
\[\begin{split}
\IE \left(Q_{n^{1/d}\hat{U}B}^V(f)\right)
= \IE_U \IE_V \left(Q_{n^{1/d}\hat{U}B}^V(f)\right)
= \IE_U \braces{I_d(f)}
= I_d(f)
,\end{split} \]
as claimed. In particular, $Q_{n^{1/d}\hat{U}B}^V(f)$ is almost surely absolutely convergent.
\end{proof}

The worst case error of this method, too, has
the order $n^{-r}\braces{\log n}^{(d-1)/2}$ on $\Hmixunitcomp$
and $n^{-s/d}$ on $\Hisounitcomp$. This is a direct consequence
of Theorem~\ref{mixthmworstcase} and Theorem~\ref{isothmworstcase}.

\begin{cor}
\label{mixthmworstcase2}
There is some $c>0$ such that for any
$n\geq 2$ and $f\in \Hmixunitcomp$
\[\begin{split}
\sup\limits_{(u,v)\in[1,2^{1/d}]^d\times[0,1]^d} \abs{Q_{n^{1/d}\hat{u}B}^v(f)-I_d(f)} \leq\, c \,  n^{-r} 
\, (\log n)^\frac{d-1}{2} \, \mixnormunit{f}
.\end{split}\]
\end{cor}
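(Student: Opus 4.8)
The plan is to derive this corollary directly from Theorem~\ref{mixthmworstcase}, exploiting the fact that the bound proved there is uniform in the shift parameter. Indeed, Theorem~\ref{mixthmworstcase} produces a \emph{single} constant $c>0$ for which
\[
\sup_{u\in[1,2^{1/d}]^d} \abs{Q_{n^{1/d}\hat{u}B}^v(f)-I_d(f)} \leq c\, n^{-r}\,(\log n)^{(d-1)/2}\,\mixnormunit{f}
\]
holds simultaneously for \emph{every} shift parameter $v\in\IR^d$, and in particular for every $v\in[0,1]^d$. The constant does not depend on $v$, which is the only feature of that theorem I need.

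First I would fix this constant $c$ and read the displayed inequality as a statement valid for each individual $v\in[0,1]^d$. The decisive observation is that its right-hand side is independent of $v$: it involves only $n$, the parameters $r,d$, and the norm $\mixnormunit{f}$. Hence the inner supremum over $u$ is bounded by one and the same quantity, no matter which admissible shift $v$ is inserted.

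Then I would pass to the joint supremum by writing it as an iterated supremum,
\[
\sup_{(u,v)\in[1,2^{1/d}]^d\times[0,1]^d} \abs{Q_{n^{1/d}\hat{u}B}^v(f)-I_d(f)}
= \sup_{v\in[0,1]^d}\ \sup_{u\in[1,2^{1/d}]^d} \abs{Q_{n^{1/d}\hat{u}B}^v(f)-I_d(f)},
\]
bound the inner supremum uniformly in $v$ by Theorem~\ref{mixthmworstcase}, and note that the outer supremum over $v$ acts on a $v$-free upper bound and therefore leaves it unchanged. This delivers the claimed estimate with the very same constant $c$.

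There is essentially no real obstacle: Theorem~\ref{mixthmworstcase} already carries out all of the analytic work (the Hölder splitting, the hyperbolic-cross count for the first factor, and the Plancherel-type estimate for the second), and by construction it yields a bound uniform over both $u$ and the shift $v$. The only point needing a moment's care is the routine identity splitting the joint supremum into iterated suprema together with the remark that $v$-independence of the constant lets the outer supremum pass through without loss; this is immediate from the form of the estimate rather than from any new computation.
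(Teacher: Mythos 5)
Your argument is exactly the paper's: the paper states Corollary~\ref{mixthmworstcase2} as a direct consequence of Theorem~\ref{mixthmworstcase}, relying precisely on the fact that the constant there is uniform in the shift parameter $v$, so the joint supremum reduces to the iterated one with a $v$-free bound. Your proposal is correct and simply makes this one-line deduction explicit.
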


\begin{cor}
\label{isothmworstcase2}
There is some $c>0$ such that for any
$n\in\IN$ and $f\in \Hisounitcomp$
\[\begin{split}
\sup\limits_{(u,v)\in[1,2^{1/d}]^d\times[0,1]^d} \abs{Q_{n^{1/d}\hat{u}B}^v(f)-I_d(f)} \leq\, c \,  n^{-s/d}
\, \isonormunit{f}
.\end{split}\]
\end{cor}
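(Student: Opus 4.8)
The plan is to derive this corollary directly from Theorem~\ref{isothmworstcase}, exactly as the surrounding text indicates. The crucial point is that the error bound in Theorem~\ref{isothmworstcase} is already a supremum over the dilation parameter $u\in[1,2^{1/d}]^d$, and that it holds for \emph{every} shift parameter $v\in\IR^d$ with one and the same constant $c$. Inspecting the proof of Theorem~\ref{isothmworstcase} confirms this uniformity: the shift $v$ enters only through the nodes $S^{-\top}(m+v)$ of the quadrature rule, but Lemma~\ref{errorlemma} bounds $\abs{Q_S^v(f)-I_d(f)}$ by $\sum_{m\neq 0}\abs{\mathcal{F}f(Sm)}$, a quantity that does not involve $v$ at all. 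Consequently, every estimate in that proof --- the bound on $\sum_{m\neq 0} v_s(n^{1/d}\hat{u}Bm)^{-1}$ by a multiple of $n^{-2s/d}$, and the bound on the remaining factor by a multiple of $\isonormunit{f}^2$ via the ratio $\abs{J_n}/\abs{\det(n^{1/d}\hat{u}B)}$ --- is completely independent of $v$.

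First I would fix an arbitrary $f\in\Hisounitcomp$ and let $c$ be the constant from Theorem~\ref{isothmworstcase}. For each fixed $v\in[0,1]^d\subseteq\IR^d$, that theorem gives
\[
\sup_{u\in[1,2^{1/d}]^d}\abs{Q_{n^{1/d}\hat{u}B}^v(f)-I_d(f)}\leq c\,n^{-s/d}\,\isonormunit{f}.
\]
Since the right-hand side does not depend on $v$, taking the supremum over $v\in[0,1]^d$ on the left preserves the same upper bound, and the double supremum over $(u,v)$ equals the supremum over $v$ of the inner supremum over $u$. This yields exactly the claimed inequality with the same constant $c$.

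There is essentially no obstacle here: the only thing that needs checking is the uniformity of the constant in $v$, which I would settle by the one-line observation that $v$ never appears in the chain of estimates bounding $\abs{Q_{n^{1/d}\hat{u}B}^v(f)-I_d(f)}$ in the proof of Theorem~\ref{isothmworstcase}. The corollary is therefore purely a matter of enlarging the index set of the supremum from $[1,2^{1/d}]^d$ to $[1,2^{1/d}]^d\times[0,1]^d$ without affecting the bound. The analogous reasoning applied to Theorem~\ref{mixthmworstcase} yields Corollary~\ref{mixthmworstcase2}.
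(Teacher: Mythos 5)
Your proposal is correct and matches the paper exactly: the paper gives no separate proof, stating only that the corollary is a direct consequence of Theorem~\ref{isothmworstcase}, which is precisely your observation that the bound there holds for every $v\in\IR^d$ with a constant independent of $v$, so the supremum over $v\in[0,1]^d$ can be taken for free. Your additional check that $v$ disappears already at the level of Lemma~\ref{errorlemma} is a correct and welcome justification of that uniformity.
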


The second advantage of this method is the slightly better
convergence order of its expected error on $\Hmixunitcomp$.
As proven by Mario Ullrich in \cite{ullrichneu},
the expected error $\IE \abs{Q_{n^{1/d}\hat{U}B}^V(f)-I_d(f)}$
of $Q_{n^{1/d}\hat{U}B}^V$ in $f\in\Hmixunitcomp$ is bounded
above by a constant multiple of $n^{-r-1/2} \, \mixnormunit{f}$
instead of a constant multiple of
$n^{-r-1/2}\,(\log n)^\frac{d-1}{2} \, \mixnormunit{f}$.

The proof even shows that the quantity $\braces{\IE \abs{Q_{n^{1/d}\hat{U}B}^V(f)-I_d(f)}^2}^{1/2}$
satisfies this bound.
This is a stronger statement, as implied by Hölder's inequality.\\

In Lemma~\ref{errorlemma}, the absolute error of $Q_S^v$ for integration
on $\Cc$ was expressed in terms of the Fourier transform.
The same can be done for
the expected quadratic error of $Q_S^V$.

\begin{lemma}
\label{errorlemma2}
For any invertible matrix
$S\in\IR^{d\times d}$ and $f\in \Cc$ we have
\[
\IE \abs{Q_S^V(f)-I_d(f)}^2 
= \sum\limits_{m\in\IZ^d\setminus\set{0}} \abs{\mathcal{F}f(Sm)}^2
.\]
\end{lemma}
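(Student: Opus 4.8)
The plan is to recognize the expected quadratic error as the squared $L^2$-norm of a periodic function of the shift and to evaluate it by Parseval's identity, thereby sidestepping the absolute-convergence issue that forced the inequality in Lemma~\ref{errorlemma}. First I would observe that replacing $v$ by $v+k$ for $k\in\IZ^d$ merely reindexes the sum defining $Q_S^v(f)$, so the map $v\mapsto Q_S^v(f)$ is $\IZ^d$-periodic and descends to a function $g(v)=Q_S^v(f)-I_d(f)$ on the torus $[0,1]^d$. Because $f\in\Cc$ is bounded and, for each fixed $v$, only a uniformly bounded number of lattice points $S^{-\top}(m+v)$ fall into $\supp f$ (as in Lemma~\ref{anlemma}), the function $g$ is bounded and measurable, hence $g\in L^2([0,1]^d)$. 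Since $V$ is uniformly distributed on $[0,1]^d$, the left-hand side of the claim equals $\int_{[0,1]^d}\abs{g(v)}^2\,\d v$.

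Next I would compute the Fourier coefficients $\hat g(m)=\int_{[0,1]^d} g(v)\,e^{-2\pi i\langle m,v\rangle}\,\d v$. For $m\neq 0$ the constant term $I_d(f)$ contributes nothing, so it suffices to evaluate $\int_{[0,1]^d}Q_S^v(f)\,e^{-2\pi i\langle m,v\rangle}\,\d v$; the finiteness of the lattice sum for each fixed $v$ permits interchanging summation and integration. Substituting $w=k+v$ in the $k$-th summand (and using $\langle m,k\rangle\in\IZ$) reassembles the integrals over $k+[0,1]^d$ into a single integral over $\IR^d$, and the affine substitution $y=S^{-\top}w$ then produces exactly $\mathcal{F}f(Sm)$ --- this is the very computation underlying Lemma~\ref{errorlemma}, but here it yields a single, unconditionally convergent coefficient rather than a doubtful series. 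The same evaluation at $m=0$ gives $\int_{[0,1]^d}Q_S^v(f)\,\d v=\mathcal{F}f(0)=I_d(f)$, whence $\hat g(0)=0$.

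Finally, applying Parseval's identity to $g\in L^2([0,1]^d)$ gives
\[
\int_{[0,1]^d}\abs{g(v)}^2\,\d v=\sum_{m\in\IZ^d}\abs{\hat g(m)}^2=\sum_{m\in\IZ^d\setminus\set{0}}\abs{\mathcal{F}f(Sm)}^2,
\]
which is the claimed identity; in particular the right-hand series is automatically finite. The only point demanding care --- and the sole real obstacle --- is the joint justification that $g\in L^2$ and that summation and integration may be interchanged in the coefficient computation; both reduce to the uniform bound on the number of contributing lattice points, so no delicate convergence analysis is required. This is precisely the advantage of the $L^2$/Parseval route over the pointwise Poisson summation used in Lemma~\ref{errorlemma}.
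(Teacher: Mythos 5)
Your proposal is correct and follows essentially the same route as the paper: view $Q_S^{(\cdot)}(f)$ as a square-integrable $\IZ^d$-periodic function of the shift, compute its Fourier coefficients to be $\mathcal{F}f(Sm)$ by reassembling the lattice sum into an integral over $\IR^d$, and apply Parseval. The only cosmetic difference is that the paper subtracts $I_d(f)^2$ via the variance decomposition (using unbiasedness) rather than working with $g(v)=Q_S^v(f)-I_d(f)$ and noting $\hat g(0)=0$; these are equivalent.
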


\begin{proof}
Since the expected value of $Q_S^V(f)$ is $I_d(f)$, we have
\[
\IE \abs{Q_S^V(f)-I_d(f)}^2
= \Var \braces{Q_S^V(f)}
= \IE \braces{Q_S^V(f)^2} - I_d(f)^2
.\]
The algorithm $Q_S^v(f)$ considered as a function of $v\in[0,1]^d$
is a finite sum of functions
$|\det S|^{-1}\,f\left(S^{-\top}(k+\cdot)\right)$ in $L^2\braces{[0,1]^d}$
and hence itself in $L^2\braces{[0,1]^d}$.
Parseval's identity states
\[\begin{split}
\IE \braces{Q_S^V(f)^2}
= \lnormunit{Q_S^{\braces{\cdot}}(f)}^2
= \sum\limits_{m\in\IZ^d}\abs{\lscalarunit{Q_S^{\braces{\cdot}}(f)}{e^{2\pi i \scalar{m}{\cdot}}}}^2
.\end{split}\]
For each index $m\in\IZ^d$ we have the equality
\[\begin{split}
\lscalarunit{Q_S^{\braces{\cdot}}(f)}{e^{2\pi i \scalar{m}{\cdot}}}
&= \abs{\det S}^{-1} \sum\limits_{k\in\IZ^d}
\int_{[0,1]^d} f\left(S^{-\top}(k+v)\right)\, e^{-2\pi i \scalar{m}{v}}\d v\\
&= \abs{\det S}^{-1}
\int_{\IR^d} f\left(S^{-\top}v\right)\, e^{-2\pi i \scalar{m}{v}}\d v\\
&= \int_{\IR^d} f\left(v\right)\, e^{-2\pi i \scalar{Sm}{v}}\d v\\
&= \mathcal{F}f(Sm)
.\end{split}\]
We arrive at
\[
\IE \abs{Q_S^V(f)-I_d(f)}^2
= \sum\limits_{m\in\IZ^d} \abs{\mathcal{F}f(Sm)}^2 - I_d(f)^2
= \sum\limits_{m\in\IZ^d\setminus\set{0}} \abs{\mathcal{F}f(Sm)}^2
,\]
which is what had to be proven.
\end{proof}

Now follows an analogue of Theorem~\ref{keyprop}
for expected quadratic errors.
Like before, the general error bound for
continuous functions with compact support
adjusts to additional smoothness
properties.

\begin{thm}  
\label{keyprop2}
There is some $c>0$ such that for every $n\in\IN$ and $f\in\Cc$
\[
\IE \abs{Q_{n^{1/d}\hat{U}B}^V(f)-I_d(f)}^2
\leq\, c \,  n^{-1} \, \Vert\mathcal{F}f\Vert_{L^2\braces{D_n}}^2
.\]
\end{thm}

\begin{proof}
By Lemma~\ref{errorlemma2} 
and the monotone convergence theorem,
\[\begin{split}
\IE \abs{Q_{n^{1/d}\hat{U}B}^V(f)-I_d(f)}^2
&= \IE_U \IE_V \abs{Q_{n^{1/d}\hat{U}B}^V(f)-I_d(f)}^2\\
&= \IE_U \left(\sum\limits_{m\in\IZ^d\setminus\set{0}} \abs{\mathcal{F}f(n^{1/d}\hat{U}Bm)}^2 \right)\\
&= \sum\limits_{m\in\IZ^d\setminus\set{0}} \IE_U \abs{\mathcal{F}f(n^{1/d}\hat{U}Bm)}^2
.\end{split}\]
Since each $n^{1/d}\hat{U}Bm$ is uniformly distributed in the 
box $[n^{1/d}Bm,(2n)^{1/d}Bm]$ with volume 
$\left(2^{1/d}-1\right)^d\cdot\abs{\prod_{j=1}^d n^{1/d} (Bm)_j}$, this series equals
\[\begin{split}
&\frac{1}{\left(2^{1/d}-1\right)^d} \sum\limits_{m\in\IZ^d\setminus\set{0}}\, 
\int\limits_{[n^{1/d}Bm,(2n)^{1/d}Bm]} \frac{\abs{\mathcal{F}f(x)}^2}{\prod_{j=1}^d
\abs{n^{1/d}(Bm)_j}} \, \d x \\
&\leq \frac{1}{\left(2^{1/d}-1\right)^d} 
\sum\limits_{m\in\IZ^d\setminus\set{0}}\, \int\limits_{[n^{1/d}Bm,(2n)^{1/d}Bm]} 
\frac{\abs{\mathcal{F}f(x)}^2}{\prod_{j=1}^d 2^{-1/d}\abs{x_j}} \, \d x\\
&= \frac{2}{\left(2^{1/d}-1\right)^d}\cdot \int_{\IR^d} 
\frac{\abs{\mathcal{F}f(x)}^2}{\prod_{j=1}^d \abs{x_j}}\cdot 
\abs{\set{m\in\IZ^d\setminus\set{0}\mid x\in [n^{1/d}Bm,(2n)^{1/d}Bm]}} \, \d x\\
&= \frac{2}{\left(2^{1/d}-1\right)^d}\cdot \int_{\IR^d} 
\frac{\abs{\mathcal{F}f(x)}^2}{\prod_{j=1}^d \abs{x_j}}\cdot 
\abs{\set{m\in\IZ^d\setminus\set{0}\mid Bm\in
\left[\frac{x}{(2n)^{1/d}},\frac{x}{n^{1/d}}\right]}}\, \d x  
.\end{split}\]
Thanks to the properties of the Frolov matrix $B$, if $\prod_{j=1}^d \abs{x_j}<n$, 
the latter set is empty and otherwise contains no more 
than $\prod_{j=1}^d \abs{\frac{x_j}{n^{1/d}}}+1\leq 2 n^{-1} \prod_{j=1}^d \abs{x_j}$ points.
Thus, we arrive at
\[
\IE \abs{Q_{n^{1/d}\hat{U}B}^V(f)-I_d(f)} \leq
\frac{4}{\left(2^{1/d}-1\right)^d}\cdot n^{-1} \int_{D_n} \abs{\mathcal{F}f(x)}^2 \, \d x 
\]
and the theorem is proven.
\end{proof}

Finally, we can prove the stated upper bound 
for the expected quadratic error of the
method $Q_{n^{1/d}\hat{U}B}^V$.

\begin{thm}
\label{mixthm2}
There is some $c>0$ such that 
for every $n\geq 2$ and $f\in \Hmixunitcomp$
\[
\braces{\IE \abs{Q_{n^{1/d}\hat{U}B}^V(f)-I_d(f)}^2}^{1/2} \leq\, c \, n^{-r-1/2} 
\, \mixnormunit{f}
.\]
\end{thm}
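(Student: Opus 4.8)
The plan is to combine Theorem~\ref{keyprop2} with a clean pointwise estimate on the weight $h_r$, thereby avoiding the Hölder step that produced the spurious logarithmic factor in Lemma~\ref{intlemmamix}. By Theorem~\ref{keyprop2} we already have $\IE\abs{Q_{n^{1/d}\hat{U}B}^V(f)-I_d(f)}^2\leq c\,n^{-1}\int_{D_n}\abs{\mathcal{F}f(x)}^2\,\d x$, so after squaring the asserted inequality it suffices to show that $\int_{D_n}\abs{\mathcal{F}f(x)}^2\,\d x$ is bounded above by a constant multiple of $n^{-2r}\,\mixnormunit{f}^2$.

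First I would recall the Fourier description of the mixed norm from Section~\ref{functionclassessec}, namely $\mixnorm{f}^2=\int_{\IR^d}\abs{\mathcal{F}f(x)}^2 h_r(x)\,\d x$, which for $f\in\Hmixunitcomp$ coincides with $\mixnormunit{f}^2$ by the support condition. The idea is to write the integrand of $\int_{D_n}\abs{\mathcal{F}f(x)}^2\,\d x$ as $\abs{\mathcal{F}f(x)}^2 h_r(x)\cdot h_r(x)^{-1}$, but rather than applying Hölder's inequality and integrating $h_r^{-1}$ over $D_n$ — the source of the logarithm in the $L^1$ setting — I would estimate $h_r(x)^{-1}$ pointwise from above on the region $D_n$.

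The key step is a pointwise lower bound on $h_r$. For $x\in D_n$ we have $\prod_{j=1}^d\abs{x_j}\geq n$ by definition of $D_n$, and keeping only the top-degree term in each factor of $h_r$ gives
\[
h_r(x)=\prod_{j=1}^d\left(\sum_{k=0}^r\abs{2\pi x_j}^{2k}\right)\geq\prod_{j=1}^d\abs{2\pi x_j}^{2r}=(2\pi)^{2rd}\left(\prod_{j=1}^d\abs{x_j}\right)^{2r}\geq(2\pi)^{2rd}\,n^{2r}.
\]
Hence $h_r(x)^{-1}\leq(2\pi)^{-2rd}n^{-2r}$ throughout $D_n$, and since the integrand $\abs{\mathcal{F}f}^2 h_r$ is nonnegative,
\[
\int_{D_n}\abs{\mathcal{F}f(x)}^2\,\d x\leq(2\pi)^{-2rd}n^{-2r}\int_{D_n}\abs{\mathcal{F}f(x)}^2 h_r(x)\,\d x\leq(2\pi)^{-2rd}n^{-2r}\,\mixnormunit{f}^2.
\]
Feeding this into Theorem~\ref{keyprop2} yields $\IE\abs{Q_{n^{1/d}\hat{U}B}^V(f)-I_d(f)}^2\leq c'\,n^{-2r-1}\,\mixnormunit{f}^2$, and taking square roots gives the claim.

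There is no hard obstacle once the right move is spotted; the only substantive point — and the reason the logarithm disappears — is that the $L^2$-type bound of Theorem~\ref{keyprop2} permits the pointwise estimate $h_r\geq(2\pi)^{2rd}n^{2r}$ on $D_n$ in place of integrating $h_r^{-1}$, which in the $L^1$ setting of Lemma~\ref{intlemmamix} unavoidably contributed the $(\log n)^{(d-1)/2}$ factor. The sole technicality is verifying that the full-space norm $\mixnorm{f}$ agrees with $\mixnormunit{f}$ for $f\in\Hmixunitcomp$, which is immediate from compact support in the unit cube.
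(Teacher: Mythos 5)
Your proof is correct and follows essentially the same route as the paper: apply Theorem~\ref{keyprop2}, insert $h_r\cdot h_r^{-1}$ into the integral over $D_n$, and use the pointwise bound $h_r(x)\geq(2\pi)^{2rd}n^{2r}\geq n^{2r}$ on $D_n$ (the paper phrases this as $\Vert h_r^{-1}\Vert_{L^\infty(D_n)}\leq n^{-2r}$). Your observation about why the logarithm disappears relative to Lemma~\ref{intlemmamix} is accurate and matches the paper's reasoning.
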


\begin{proof}
If $c_0$ is the constant of Theorem~\ref{keyprop2}, we have the upper bound
\[\begin{split}
\IE \abs{Q_{n^{1/d}\hat{U}B}^V(f)-I_d(f)}^2
&\leq c_0 \,  n^{-1} \, \Vert\mathcal{F}f\Vert_{L^2\braces{D_n}}^2\\
&= c_0\, n^{-1} \, \int_{D_n} h_r(x)^{-1}\cdot \left|\mathcal{F}f(x)\right|^2\, h_r(x)\, \d x\\
&\leq\, c_0 \,  n^{-1} \, \Vert h_r^{-1}\Vert_{L^\infty\braces{D_n}} \cdot \int_{\IR^d}\left|\mathcal{F}f(x)\right|^2\, h_r(x)\, \d x
\end{split}\]
for the expected quadratic error.\\
Since $h_r(x)\geq n^{2r}$ for $x\in D_n$, we obtain the estimate
\[
\braces{\IE \abs{Q_{n^{1/d}\hat{U}B}^V(f)-I_d(f)}^2}^{1/2} \leq c_0^{1/2}\, n^{-r-1/2} 
\, \mixnormunit{f}
.\]
which proves the theorem.
\end{proof}

The method $Q_{n^{1/d}\hat{U}B}^V$ is also optimal for $\Hisounitcomp$.
This can be derived from Theorem~\ref{keyprop2} using the same short argument
from the proof of Theorem~\ref{mixthm2}.
The upper bound for $\IE \abs{Q_{n^{1/d}\hat{U}B}^V(f)-I_d(f)}$
is also a direct consequence of Theorem~\ref{isothm}.

\begin{thm}
\label{isothm2}
There is some $c>0$ such that 
for every $n\in\IN$ and $f\in \Hisounitcomp$
\[
\braces{\IE \abs{Q_{n^{1/d}\hat{U}B}^V(f)-I_d(f)}^2}^{1/2} \leq\, c \, n^{-s/d-1/2} 
\, \isonormunit{f}
.\]
\end{thm}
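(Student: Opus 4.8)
The plan is to mirror the short argument from the proof of Theorem~\ref{mixthm2}, with the isotropic weight $v_s$ taking over the role that $h_r$ played there. Theorem~\ref{keyprop2} already supplies the general estimate $\IE \abs{Q_{n^{1/d}\hat{U}B}^V(f)-I_d(f)}^2 \le c_0 \, n^{-1} \, \Vert\mathcal{F}f\Vert_{L^2\braces{D_n}}^2$ for some $c_0>0$ and every $f\in\Cc$. Since $\Hisounitcomp\subseteq\Cc$, the only work left is to bound the factor $\Vert\mathcal{F}f\Vert_{L^2\braces{D_n}}^2$ by a constant multiple of $n^{-2s/d}\,\isonormunit{f}^2$.

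The mechanism is the same weighting trick as in Theorem~\ref{mixthm2}: I would write $\abs{\mathcal{F}f(x)}^2 = v_s(x)^{-1}\cdot\abs{\mathcal{F}f(x)}^2\, v_s(x)$ and use the Fourier description $\isonorm{f}^2=\int_{\IR^d}\abs{\mathcal{F}f(x)}^2\, v_s(x)\,\d x$ of the isotropic norm, giving
\[
\Vert\mathcal{F}f\Vert_{L^2\braces{D_n}}^2 = \int_{D_n} v_s(x)^{-1}\cdot\abs{\mathcal{F}f(x)}^2\, v_s(x)\,\d x \le \Vert v_s^{-1}\Vert_{L^\infty\braces{D_n}}\cdot\isonorm{f}^2 .
\]
Thus the decisive quantity is $\Vert v_s^{-1}\Vert_{L^\infty(D_n)}$, i.e.\ a uniform lower bound for $v_s$ on the complement $D_n$ of the hyperbolic cross.

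For this lower bound I would recycle the elementary observation already used in Lemma~\ref{intlemmaiso}: every $x\in D_n$ satisfies $\Vert x\Vert_2\ge\max_j\abs{x_j}\ge n^{1/d}$, because $\prod_{j=1}^d\abs{x_j}\ge n$ forces at least one coordinate to exceed $n^{1/d}$ in absolute value. Combined with the equivalence $v_s(x)\asymp\braces{1+\Vert x\Vert_2^2}^s$ recorded in Section~\ref{functionclassessec}, this yields $v_s(x)\ge c_1\, n^{2s/d}$ on $D_n$ for some $c_1>0$, hence $\Vert v_s^{-1}\Vert_{L^\infty(D_n)}\le c_1^{-1}\, n^{-2s/d}$. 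Substituting back into Theorem~\ref{keyprop2} gives $\IE \abs{Q_{n^{1/d}\hat{U}B}^V(f)-I_d(f)}^2 \le c_0 c_1^{-1}\, n^{-2s/d-1}\,\isonorm{f}^2$, and taking square roots produces the asserted order $n^{-s/d-1/2}$, with $\isonorm{f}=\isonormunit{f}$ since $f$ is supported in the unit cube.

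I expect essentially no obstacle here; the proof is genuinely as short as the preceding remark suggests. Unlike the mixed case of Theorem~\ref{mixthm2}, where the full strength of the hyperbolic-cross estimate was needed, the isotropic bound rests only on the single inequality $\Vert x\Vert_2\ge n^{1/d}$ on $D_n$, and the Frolov properties (b) and (c) of $B$ enter solely through the prior application of Theorem~\ref{keyprop2}. The one point that merits a word of care is the identification of the ambient-space norm $\isonorm{f}$ with the unit-cube norm $\isonormunit{f}$, which is legitimate precisely because $f\in\Hisounitcomp$ has compact support in $[0,1]^d$ so that the two norms coincide.
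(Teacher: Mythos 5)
Your proposal is correct and follows exactly the route the paper indicates for this theorem: apply Theorem~\ref{keyprop2}, insert the weight $v_s$, and bound $\Vert v_s^{-1}\Vert_{L^\infty(D_n)}$ via $\Vert x\Vert_2\geq n^{1/d}$ on $D_n$ together with $v_s(x)\asymp(1+\Vert x\Vert_2^2)^s$, which is precisely the ``same short argument from the proof of Theorem~\ref{mixthm2}'' that the paper invokes without writing out. Your closing remarks on the role of the Frolov properties and the identification of $\isonorm{f}$ with $\isonormunit{f}$ for compactly supported $f$ are also accurate.
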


See \cite{ln} for a proof of the optimality of this order.

\newpage
\section{Transformations to $\Hmixunit$}
\label{transformationsection}

We can transform the above methods $Q_{n^{1/d}B}$ and $Q_{n^{1/d}\hat{U}B}^V$
such that their errors satisfy the same upper bounds for the full spaces
$\Hmixunit$ and $\Hisounit$, that the original algorithms satisfy
for the subspaces $\Hmixunitcomp$ and $\Hisounitcomp$.
This is done by a standard method, which was already used in 
\cite[pp.\,359]{temlyakov} to transform Frolov's deterministic algorithm.
It preserves the unbiasedness of the algorithm $Q_{n^{1/d}\hat{U}B}^V$.

To that end let $\psi:\IR\to\IR$ be an infinitely differentiable function 
such that $\psi|_{(-\infty,0)}=0$, $\psi|_{(1,\infty)}=1$ 
and $\psi|_{(0,1)}:(0,1)\to(0,1)$ is a diffeomorphism. For example, we can choose
\[
h(x)=\begin{cases}
e^\frac{1}{(2x-1)^2-1} & \text{if } x\in(0,1),\\
0 & \text{else,}
\end{cases}
\quad\quad
\psi(x)=\frac{\int_{-\infty}^x h(t) \, \d t}{\int_{-\infty}^{\infty} h(t) \, \d t}
\]
for $x\in\IR$. Like $h$ also $\psi$ is infinitely differentiable and apparently 
satisfies $\psi|_{(-\infty,0)}=0$ and $\psi|_{(1,\infty)}=1$. 
Since the derivative of $\psi$ is strictly positive on $(0,1)$, 
it is strictly increasing and a bijection of $(0,1)$ and its inverse function is smooth.

\begin{minipage}[h!]{.49\linewidth}
\includegraphics[width=\linewidth]{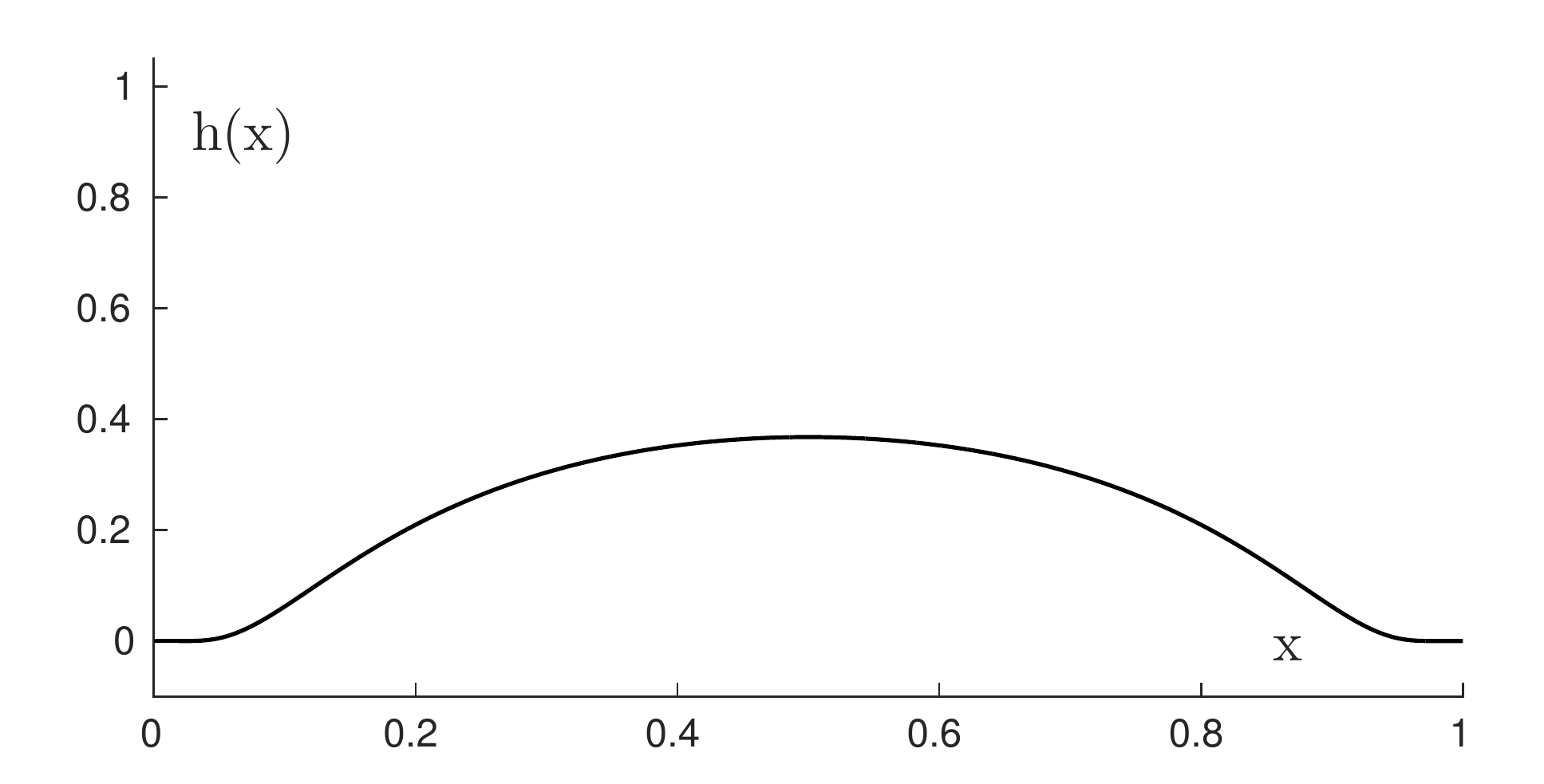}
\end{minipage}
\begin{minipage}[h!]{.46\linewidth}
\includegraphics[width=\linewidth]{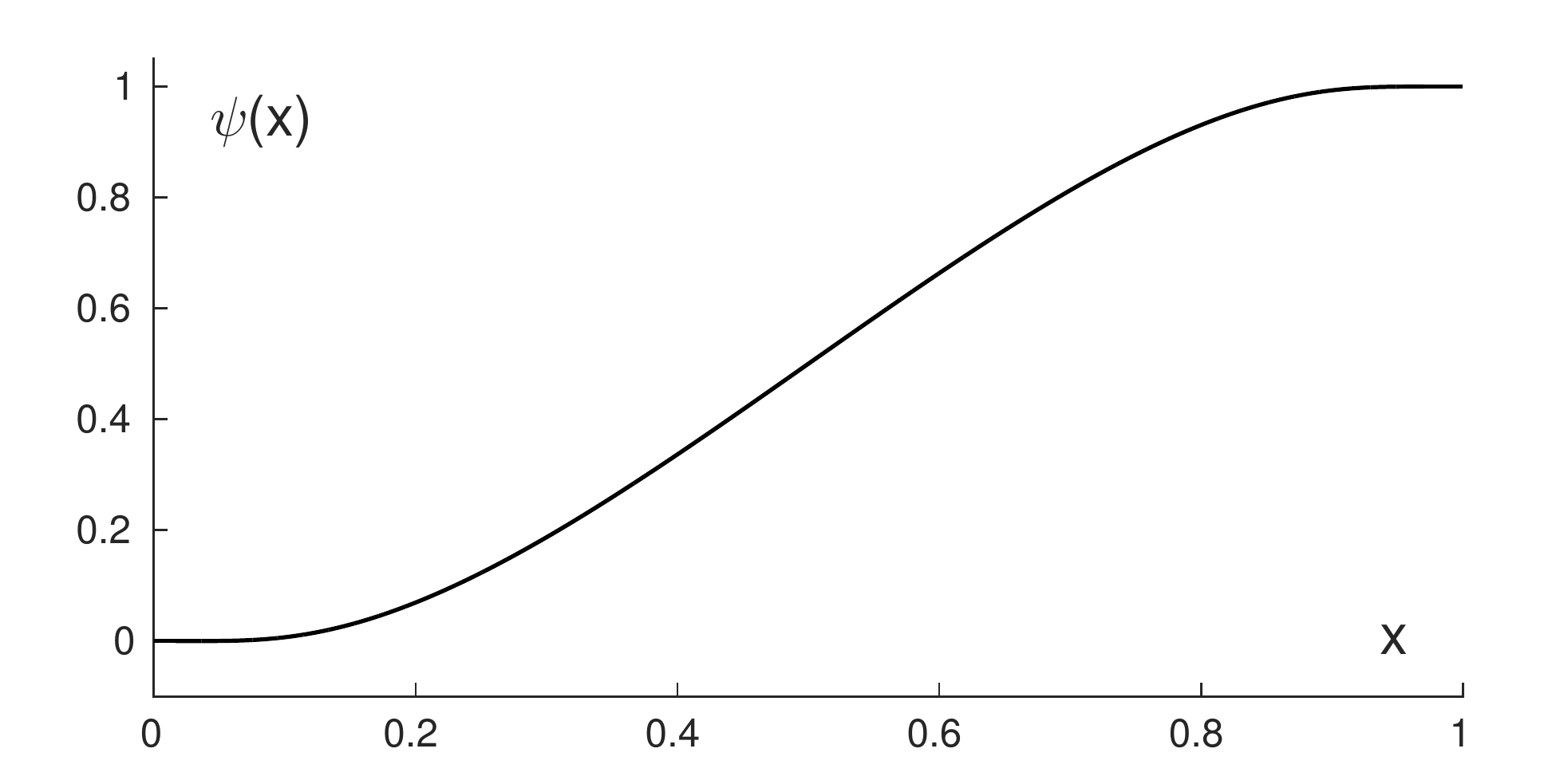}
\end{minipage}

Given such $\psi$, the map $\Psi:\IR^d\to\IR^d$ with 
$\Psi(x)=(\psi(x_1),\hdots,\psi(x_d))^{\top}$ is a diffeomorphism on $(0,1)^d$ 
with inverse $\Psi^{-1}(x)=(\psi^{-1}(x_1),\hdots,\psi^{-1}(x_d))^{\top}$ 
and $|D\Psi(x)|\overset{\psi'\geq 0}{=}\det D\Psi(x)=\prod\limits_{i=1}^{d}\psi'(x_i)$.

If $A_n$ is any linear quadrature formula for integration on the unit cube
with nodes $x^{(j)}\in[0,1]^d$ and weights $a_j\in\IR$, 
where $j=1,\hdots,n$, we define the transformed quadrature formula 
$\widetilde{A}_n$ by choosing the nodes and weights to be
\[
\tilde{x}^{(j)}=\Psi(x^{(j)})$ \text{\ \ \ \ and \ \ \ \ } 
$\tilde{a_j}=a_j\cdot|D\Psi(x^{(j)})|.
\]
Thus, $\widetilde{Q}_S^v$ for $v\in\IR^d$ and invertible $S\in\IR^{d\times d}$ takes the form
\[
\widetilde{Q}_S^v(f)=\frac{1}{\abs{\det S}}
\sum\limits_{m\in\IZ^d} f\left(\Psi\left(S^{-\top}(m+v)\right)\right)
\cdot \left|D\Psi\left(S^{-\top}(m+v)\right)\right|
\]
for any input function $f:[0,1]^d\to \IR$. Note that
$\left|D\Psi\left(S^{-\top}(m+v)\right)\right|$ is zero for any index
$m\in\IZ^d$ with $S^{-\top}(m+v)\not\in [0,1]^d$.

\medskip 
\noindent
{\bf Algorithms.} 
For any $n\in\IN$ we consider the transformed versions
$\widetilde{Q}_{n^{1/d}B}$ and $\widetilde{Q}_{n^{1/d}\hat{U}B}^V$ of the algorithms
$Q_{n^{1/d}B}$ and $Q_{n^{1/d}\hat{U}B}^V$ from Section \ref{frolovsrulesection}
and Section \ref{randomshiftsection}.

\medskip

These algorithms are well defined for any input function $f:[0,1]^d\to\IR$.
The information costs of $\widetilde{Q}_{n^{1/d}B}$ and $\widetilde{Q}_{n^{1/d}\hat{U}B}^V$ are of order $n$.
By Lemma \ref{anlemma}, they are at most
$2\cdot\left(\Vert B\Vert_1+1\right)^d\cdot n$.

\begin{prop}
\label{MSchlangeunbiased}
The method $\widetilde{Q}_{n^{1/d}\hat{U}B}^V$ is well-defined and unbiased on $L^1([0,1]^d)$.
\end{prop}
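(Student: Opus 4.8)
The plan is to reduce the assertion to Proposition~\ref{Munbiased} by recognising the transformed rule as the \emph{original} rule applied to a modified integrand. First I would introduce the auxiliary function $g=(f\circ\Psi)\cdot\abs{D\Psi}$ on all of $\IR^d$. Comparing the display defining $\widetilde{Q}_S^v$ with the defining formula for $Q_S^v$ from Section~\ref{basicquadrulesection}, one reads off directly that for every invertible $S\in\IR^{d\times d}$ and every $v\in\IR^d$
\[
\widetilde{Q}_S^v(f)=Q_S^v(g).
\]
This identity holds pathwise, i.e.\ for each fixed realisation of the random parameters, so in particular $\widetilde{Q}_{n^{1/d}\hat{U}B}^V(f)=Q_{n^{1/d}\hat{U}B}^V(g)$ as random variables.

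Next I would verify that $g\in L^1(\IR^d)$, which is exactly the hypothesis needed to invoke Proposition~\ref{Munbiased}. The key observation is that $\abs{D\Psi(x)}=\prod_{i=1}^d\psi'(x_i)$ vanishes as soon as one coordinate $x_i$ leaves $(0,1)$, since $\psi$ is constant there; hence $g$ is supported in $[0,1]^d$ and only the behaviour of $\Psi$ on the open cube $(0,1)^d$ enters. As $\Psi$ restricts to a genuine diffeomorphism of $(0,1)^d$ with Jacobian determinant $\abs{D\Psi(x)}$, the change of variables $y=\Psi(x)$ gives
\[
\int_{\IR^d}\abs{g(x)}\,\d x=\int_{(0,1)^d}\abs{f(\Psi(x))}\,\abs{D\Psi(x)}\,\d x=\int_{(0,1)^d}\abs{f(y)}\,\d y<\infty,
\]
so $g\in L^1(\IR^d)$; the same computation without absolute values yields $I_d(g)=I_d(f)$.

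Finally I would apply Proposition~\ref{Munbiased} to $g\in L^1(\IR^d)$: it asserts that $Q_{n^{1/d}\hat{U}B}^V(g)$ converges absolutely almost surely and that $\IE\,Q_{n^{1/d}\hat{U}B}^V(g)=I_d(g)$. Inserting the pathwise identity from the first step and the equality $I_d(g)=I_d(f)$ then shows that $\widetilde{Q}_{n^{1/d}\hat{U}B}^V(f)$ is well-defined and satisfies $\IE\,\widetilde{Q}_{n^{1/d}\hat{U}B}^V(f)=I_d(f)$, which is the claim.

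The step demanding the most care is the change of variables, where one must use that $\Psi$ is required to be a diffeomorphism only on $(0,1)^d$: it is precisely the vanishing of the Jacobian on and beyond $\partial[0,1]^d$ that confines $g$ to the cube, makes the substitution legitimate, and ensures $g$ is well-defined almost everywhere even though $f$ is only an $L^1$ class. Lattice nodes that happen to fall on $\partial[0,1]^d$ carry weight $\abs{D\Psi}=0$ and therefore contribute to neither the quadrature sum nor the integral, so no boundary difficulties arise.
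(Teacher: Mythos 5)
Your proposal is correct and follows essentially the same route as the paper: define $f_0=(f\circ\Psi)\cdot\abs{D\Psi}$, note the pathwise identity $\widetilde{Q}_S^v(f)=Q_S^v(f_0)$ together with $I_d(f)=I_d(f_0)$ via the change of variables theorem, and then invoke Proposition~\ref{Munbiased}. Your additional remarks on the vanishing Jacobian confining $f_0$ to the cube merely make explicit what the paper leaves implicit.
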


\begin{proof}
Let $f\in L^1([0,1]^d)$. By the Change of Variables Theorem,
$f_0=f\circ \Psi \cdot \left|D\Psi\right|$ is also integrable on $[0,1]^d$
and satisfies
\[
I_d(f)=I_d(f_0)\quad \text{and}\quad \widetilde{Q}_{n^{1/d}\hat{u}B}^v(f) = Q_{n^{1/d}\hat{u}B}^v(f_0)
\]
for any realization $\widetilde{Q}_{n^{1/d}\hat{u}B}^v$ of $\widetilde{Q}_{n^{1/d}\hat{U}B}^V$.
This yields
\[
\IE \left(\widetilde{Q}_{n^{1/d}\hat{U}B}^V(f)\right) = \IE \left( Q_{n^{1/d}\hat{U}B}^V(f_0)\right) = I_d(f_0) = I_d(f)
\]
by Proposition~\ref{Munbiased}.
\end{proof}

Most notably, $\widetilde{Q}_{n^{1/d}\hat{U}B}^V$ satisfies the following error bounds on $\Hmixunit$.

\begin{thm}
\label{MSchlangebounds}
There is some $c>0$ such that for every $n\geq 2$ and $f\in\Hmixunit$
\[\begin{split}
\braces{\IE\abs{\widetilde{Q}_{n^{1/d}\hat{U}B}^V(f)-I_d(f)}^2}^{1/2}
&\leq c\, n^{-r-1/2}\,\mixnormunit{f}
\quad \text{and}\\
\sup\limits_{(u,v)\in[1,2^{1/d}]^d\times[0,1]^d} \abs{\widetilde{Q}_{n^{1/d}\hat{U}B}^V(f)-I_d(f)}
&\leq\, c \,  n^{-r}\, (\log n)^\frac{d-1}{2}
\, \mixnormunit{f}
.\end{split}\]
\end{thm}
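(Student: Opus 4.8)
The plan is to push everything back to the compactly supported case through the substitution already appearing in the proof of Proposition~\ref{MSchlangeunbiased}. For $f\in\Hmixunit$ put $f_0=f\circ\Psi\cdot\abs{D\Psi}$. That proof shows $I_d(f)=I_d(f_0)$ and, for every realization $(u,v)$, the pointwise identity $\widetilde{Q}_{n^{1/d}\hat{u}B}^v(f)=Q_{n^{1/d}\hat{u}B}^v(f_0)$; hence the (random) error of the transformed rule at $f$ agrees with the error of the untransformed rule at $f_0$. Taking $\sup_{(u,v)}$ and $\IE$ respectively, both asserted inequalities follow immediately from Corollary~\ref{mixthmworstcase2} and Theorem~\ref{mixthm2} applied to $f_0$, provided one establishes that $f_0\in\Hmixunitcomp$ and that there is a constant $c$, independent of $f$, with
\[
\mixnormunit{f_0}\leq c\,\mixnormunit{f}.
\]
Granting this, both bounds for $f$ drop out with the constants of the cited results times $c$.

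Next I would settle the class membership. By construction $\psi$ is constant outside $(0,1)$, and on $(0,1)$ one has $\psi'\propto e^{\phi}$ with $\phi(x)=1/((2x-1)^2-1)\to-\infty$ at both endpoints, so $\psi'$ is positive on $(0,1)$ and flat to infinite order at $0$ and $1$. Consequently the factor $\abs{D\Psi(x)}=\prod_{i=1}^d\psi'(x_i)$, and with it $f_0$ extended by zero, is continuous, supported in $[0,1]^d$, and vanishes to infinite order on the boundary. This flatness is exactly what will place $f_0$ in $\Hmixunitcomp\subseteq\Cc$ once the norm bound is in hand.

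The heart of the matter---and the step I expect to cost the most work---is the norm inequality. Since $\Psi$ acts coordinatewise, differentiating $f_0$ decouples coordinate by coordinate: for $\alpha\in\set{0,\dots,r}^d$, Leibniz' rule and the chain rule (Fa\`a di Bruno) expand $D^\alpha f_0$ into a finite sum of terms $\braces{D^\beta f}\circ\Psi\cdot\prod_{i=1}^d w_i(x_i)$, where $\beta\in\set{0,\dots,r}^d$ because the $i$-th argument of $f$ is differentiated at most $\alpha_i\leq r$ times---this is precisely why dominating mixed smoothness is preserved---and each weight $w_i$ is a product of derivatives $\psi^{(\ell)}(x_i)$. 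Estimating $\lnormunit{D^\alpha f_0}^2$ by Cauchy--Schwarz over the finitely many terms and substituting $y=\Psi(x)$ (Jacobian $\abs{D\Psi}^{-1}$) reduces the whole claim to the boundedness, uniformly on $(0,1)$, of the one-dimensional ratios $\abs{w_i(x)}^2/\psi'(x)$; each term is then controlled by $\lnormunit{D^\beta f}^2\leq\mixnormunit{f}^2$, and summing over the finitely many $\alpha$ yields the desired bound.

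It remains to verify that boundedness, and here the infinite-order flatness of $\psi'$ is decisive. Because $\psi'\propto e^{\phi}$ with $\phi$ rational, every quotient $\psi^{(m)}/\psi'$ is a rational function whose only poles sit at $0$ and $1$; writing $w_i=(\psi')^{\,t}\,\tilde{R}$, where $t\geq 1$ is the number of $\psi$-derivative factors collected in $w_i$ and $\tilde{R}$ is rational, one finds $\abs{w_i}^2/\psi'=(\psi')^{\,2t-1}\,\abs{\tilde{R}}^2$. Near the endpoints $(\psi')^{\,2t-1}$ decays like $e^{-(2t-1)/(4x)}$, faster than any power, so it annihilates the at-most-polynomial blow-up of $\abs{\tilde{R}}^2$, while away from the endpoints everything is continuous. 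Hence $\abs{w_i}^2/\psi'$ is bounded on $(0,1)$, the estimate $\mixnormunit{f_0}\leq c\,\mixnormunit{f}$ holds, and combining it with Corollary~\ref{mixthmworstcase2} and Theorem~\ref{mixthm2} proves both inequalities.
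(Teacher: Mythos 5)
Your proposal is correct and, at the top level, follows the same route as the paper: reduce to $f_0=f\circ\Psi\cdot\abs{D\Psi}$ via the identities $\widetilde{Q}_{n^{1/d}\hat{u}B}^v(f)=Q_{n^{1/d}\hat{u}B}^v(f_0)$ and $I_d(f)=I_d(f_0)$, check $f_0\in\Hmixunitcomp$, apply Theorem~\ref{mixthm2} and Corollary~\ref{mixthmworstcase2} to $f_0$, and establish $\mixnormunit{f_0}\leq c\,\mixnormunit{f}$. Where you genuinely diverge is in how that norm inequality is closed, and your version is the more robust one. The paper bounds each coefficient $S_{\alpha,\beta}$ by a constant, pulls it out of the integral, and only then substitutes $y=\Psi(x)$; this produces the factor $\sup_{x\in(0,1)^d}\abs{D\Psi^{-1}(x)}$, which is actually infinite, since $(\psi^{-1})'(y)=1/\psi'(\psi^{-1}(y))$ blows up at the endpoints where $\psi'$ vanishes to infinite order. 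You instead keep the weight inside the integral and reduce everything to the uniform boundedness of the one-dimensional ratios $\abs{w_i(x)}^2/\psi'(x)$, which holds because each coordinate factor $w_i$ contains at least one derivative of $\psi$ (inherited from the density $\prod_i\psi'(x_i)$), so $\abs{w_i}^2/\psi'=(\psi')^{2t-1}\abs{\tilde R}^2$ with $2t-1\geq 1$ and the exponential flatness of $\psi'$ absorbs the at-most-polynomial blow-up of the rational factor $\tilde R$. This is essentially Temlyakov's original organization of the estimate, and it is what makes the constant finite; the shortcut via $\sup\abs{D\Psi^{-1}}$ does not work as written. One minor caveat: your verification of the boundedness of $\abs{w_i}^2/\psi'$ uses the explicit choice $\psi'\propto e^{1/((2x-1)^2-1)}$, so strictly speaking you prove the theorem for that particular transformation $\Psi$ rather than for every $\psi$ satisfying the listed qualitative properties — which is entirely sufficient, since only one admissible $\psi$ is needed.
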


\begin{proof}
Recall that
$\widetilde{Q}_{n^{1/d}\hat{u}B}^v(f) = Q_{n^{1/d}\hat{u}B}^v(f_0)$ and $I_d(f)=I_d(f_0)$ 
for any function $f\in L^1(\IR^d)$ and $f_0=f\circ \Psi \cdot \left|D\Psi\right|$.

Since $\psi'(x)=0$ for $x\not\in(0,1)$, we have 
$\diff^\alpha f_0|_{\partial[0,1]^d}=0$ for each $\alpha\in\{0,\dots,r\}^d$ 
and hence $f_0\in \Hmixunitcomp$ for any $f\in\Hmixunit$.\\
That implies the estimates
\[\begin{split}
\braces{\IE \abs{\widetilde{Q}_{n^{1/d}\hat{U}B}^V(f)-I_d(f)}^2}^{1/2}
&= \braces{\IE \abs{ Q_{n^{1/d}\hat{u}B}^v(f_0)-I_d(f_0)}^2}^{1/2}\\
&\leq c \cdot n^{-r-1/2}\cdot \mixnormunit{f_0}, \\
\sup\limits_{(u,v)\in[1,2^{1/d}]^d\times[0,1]^d} \abs{\widetilde{Q}_{n^{1/d}\hat{u}B}^v(f)-I_d(f)}
&= \sup\limits_{(u,v)\in[1,2^{1/d}]^d\times[0,1]^d} \abs{ Q_{n^{1/d}\hat{u}B}^v(f_0)-I_d(f_0)}\\
&\leq c \cdot n^{-r} (\log n)^\frac{d-1}{2} \cdot \mixnormunit{f_0}
,\end{split}\]
if $c>0$ is the maximum of the constants of 
Theorem~\ref{mixthm2} and Corollary~\ref{mixthmworstcase2}.
That proves the statement,
since there is some $c_0>0$ such that every $f\in\Hmixunit$ 
satisfies $\mixnormunit{f_0}\leq c_0\, \mixnormunit{f}$. 
This can be proven as follows.

The partial derivatives of $f_0$ take the form
\[
D^\alpha f_0(x)=\frac{\partial^{\abs{\alpha}}}{\partial 
x_1^{\alpha_1}\cdots\partial x_d^{\alpha_d}}\ f(\Psi(x))
\cdot\prod\limits_{i=1}^{d}\psi'(x_i) 
= \sum\limits_{\beta_1,\hdots,\beta_d=0}^{\alpha_1,\hdots,\alpha_d} 
D^\beta f(\Psi(x))\cdot S_{\alpha,\beta}(x)
\]
for $\alpha\in\{0,1,\hdots,r\}^d$, where $S_{\alpha,\beta}(x)$ is a finite 
sum of finite products of terms $\psi^{(j)}(x_i)$ with $i\in\{1,\hdots,d\}, 
j\in\{1,\hdots,rd+1\}$ and does not depend on $f$. It is therefore continuous 
and bounded by some $c_{\alpha,\beta}>0$.

A special case of Cauchy's inequality says that
the square of a sum is at most the sum of the squares
times the number of addends.

Using these facts, we get
\[\begin{split}
\lnormunit{D^\alpha f_0}^2
&\leq \left(\sum\limits_{\beta_1,\hdots,\beta_d=0}^{\alpha_1,\hdots,\alpha_d} 
\lnormunit{(D^\beta f \circ \Psi) \cdot S_{\alpha,\beta}}\right)^2\\
&\leq \left(\sum\limits_{\beta_1,\hdots,\beta_d=0}^{\alpha_1,\hdots,\alpha_d} 
c_{\alpha,\beta} \cdot\lnormunit{D^\beta f \circ \Psi}\right)^2\\
&\leq (r+1)^d\sum\limits_{\beta_1,\hdots,\beta_d=0}^{\alpha_1,
\hdots,\alpha_d} c_{\alpha,\beta}^2 \cdot\lnormunit{D^\beta f \circ \Psi}^2\\
&= (r+1)^d\sum\limits_{\beta_1,\hdots,\beta_d=0}^{\alpha_1,\hdots,\alpha_d} 
c_{\alpha,\beta}^2 \int_{(0,1)^d} |D^\beta f (\Psi (x))|^2 \,\mathrm{d}x\\
&= (r+1)^d\sum\limits_{\beta_1,\hdots,\beta_d=0}^{\alpha_1,\hdots,\alpha_d} 
c_{\alpha,\beta}^2 \int_{\Psi\left((0,1)^d\right)} |D^\beta f (\Psi (\Psi^{-1}(x))|^2 
\cdot |D\Psi^{-1}(x)| \, \mathrm{d}x\\
&\leq (r+1)^d\sup\limits_{x\in(0,1)^d} |D\Psi^{-1}(x)| 
\sum\limits_{\beta_1,\hdots,\beta_d=0}^{\alpha_1,\hdots,\alpha_d} c_{\alpha,\beta}^2 
\cdot \lnormunit{D^\beta f}^2
\end{split}\]
\[\begin{split}
&\leq c_\alpha \cdot \mixnormunit{f}^2\
,\end{split}\]
for some $c_\alpha>0$ and
\[
\mixnormunit{f_0}^2
=\sum\limits_{\alpha\in\{0,1,\hdots,r\}^d}\lnormunit{D^\alpha f_0}^2
\leq \tilde{c} \, \mixnormunit{f}^2
,\]
if $\tilde{c}$ is the sum of the constants $c_\alpha$ for $\alpha\in\set{0,1,\dots,r}^d$.
\end{proof}

The corresponding error bounds for $\widetilde{Q}_{n^{1/d}\hat{U}B}^V$ on $\Hisounit$ are proven in the exact same manner.

\begin{thm}
\label{MSchlangebounds2}
There is some $c>0$ such that for every $n\in\IN$ and $f\in\Hisounit$
\[\begin{split}
\braces{\IE\abs{\widetilde{Q}_{n^{1/d}\hat{U}B}^V(f)-I_d(f)}^2}^{1/2}
&\leq c\, n^{-s/d-1/2}\,\isonormunit{f}
\quad \text{and}\\
\sup\limits_{(u,v)\in[1,2^{1/d}]^d\times[0,1]^d} \abs{\widetilde{Q}_{n^{1/d}\hat{u}B}^v(f)-I_d(f)}
&\leq\, c \,  n^{-s/d}
\, \isonormunit{f}
.\end{split}\]
\end{thm}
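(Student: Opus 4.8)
The plan is to follow the proof of Theorem~\ref{MSchlangebounds} verbatim, substituting the isotropic ingredients for the mixed ones. Recall from the construction of the transformation that, for every $f\in L^1(\IR^d)$ and $f_0=f\circ\Psi\cdot\abs{D\Psi}$, one has $\widetilde{Q}_{n^{1/d}\hat{u}B}^v(f)=Q_{n^{1/d}\hat{u}B}^v(f_0)$ and $I_d(f)=I_d(f_0)$ for every realization. Consequently the mean-square error and the worst-case error of $\widetilde{Q}_{n^{1/d}\hat{U}B}^V$ at $f$ equal the corresponding quantities of the untransformed algorithm at $f_0$. Applying Theorem~\ref{isothm2} and Corollary~\ref{isothmworstcase2} to $f_0$ then yields the two claimed bounds, each with $\isonormunit{f}$ replaced by $\isonormunit{f_0}$.

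Two points remain. First I would verify that $f_0\in\Hisounitcomp$ whenever $f\in\Hisounit$. Since $\psi$ is constant outside $(0,1)$ and smooth, all derivatives $\psi^{(j)}$ with $j\geq 1$ vanish at the endpoints $0$ and $1$; the very same reasoning as in the proof of Theorem~\ref{MSchlangebounds} shows that $D^\alpha f_0$ vanishes on $\partial[0,1]^d$ for each $\alpha\in\IN_0^d$ with $\abs{\alpha}\leq s$, whence $f_0$ has compact support in the open cube and lies in $\Hisounitcomp$. Second, it suffices to establish the norm comparison $\isonormunit{f_0}\leq c_0\,\isonormunit{f}$ for some $c_0>0$ independent of $f$.

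For this comparison I would expand, by the multivariate chain rule,
\[
D^\alpha f_0(x)=\sum_{\beta\leq\alpha} D^\beta f(\Psi(x))\cdot S_{\alpha,\beta}(x),
\]
where each $S_{\alpha,\beta}$ is a finite sum of finite products of factors $\psi^{(j)}(x_i)$, hence continuous and bounded by some $c_{\alpha,\beta}>0$. Exactly as in the mixed case, Cauchy's inequality (the square of a sum is at most its number of terms times the sum of the squares), the boundedness of the $S_{\alpha,\beta}$, and the change of variables $x\mapsto\Psi(x)$ with Jacobian $\abs{D\Psi^{-1}}$ bounded on $(0,1)^d$ give $\lnormunit{D^\alpha f_0}^2\leq c_\alpha\,\isonormunit{f}^2$; summing over $\abs{\alpha}\leq s$ produces the asserted inequality.

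The one step deserving genuine attention, and the only real difference from the mixed-smoothness proof, is the observation that every multi-index $\beta$ occurring in the above expansion satisfies $\abs{\beta}\leq\abs{\alpha}$. Indeed, each application of a partial derivative $\partial_{x_k}$ to $f\circ\Psi$ raises the total order of the $f$-derivative by at most one, the remaining contributions being $\psi$-derivative factors; thus after $\abs{\alpha}$ differentiations only derivatives $D^\beta f$ of total order $\abs{\beta}\leq\abs{\alpha}\leq s$ appear. This is precisely what guarantees that the isotropic norm $\isonormunit{f}$---which controls exactly the derivatives of total order at most $s$---dominates the right-hand side, and it is where the isotropic argument must be slightly more careful than the tensor-product bookkeeping of Theorem~\ref{MSchlangebounds}.
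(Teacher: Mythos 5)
Your proposal is correct and coincides with the paper's intended argument: the paper gives no separate proof of Theorem~\ref{MSchlangebounds2}, stating only that it is ``proven in the exact same manner'' as Theorem~\ref{MSchlangebounds}, which is precisely the reduction you carry out (pass to $f_0=f\circ\Psi\cdot\abs{D\Psi}$, apply Theorem~\ref{isothm2} and Corollary~\ref{isothmworstcase2}, and establish $\isonormunit{f_0}\leq c_0\,\isonormunit{f}$). You also correctly isolate the one point where the isotropic case needs its own remark, namely that the chain rule for the coordinate-wise map $\Psi$ only produces derivatives $D^\beta f$ with $\abs{\beta}\leq\abs{\alpha}\leq s$, so the isotropic norm indeed controls all terms.
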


The optimality of this order of convergence of the expected error on $\Hisounit$
was already stated by N.\,S.\,Bakhvalov in 1962, see \cite{bakhvalov}.
A proof can be found in \cite{ln}.
The optimality of the upper bound of Theorem~\ref{mixthm2}
for arbitrary dimensions can be derived from Bakhvalov's result
for the one-dimensional case.\\

Since the transformed version $\widetilde{Q}_{n^{1/d}B}$ of Frolov's deterministic algorithm is a particular realization
of the method $\widetilde{Q}_{n^{1/d}\hat{U}B}^V$, Theorem~\ref{MSchlangebounds} and \ref{MSchlangebounds2} imply the
following error bounds.

\begin{cor}
There is some $c>0$ such that for any $n\geq 2$ and $f\in\Hmixunit$
\[
\abs{\widetilde{Q}_{n^{1/d}B}(f)-I_d(f)} \leq\, c \,  n^{-r} 
\, (\log n)^\frac{d-1}{2} \, \mixnormunit{f}
\]
and for any $n\in\IN$ and $f\in\Hisounit$
\[
\abs{\widetilde{Q}_{n^{1/d}B}(f)-I_d(f)} \leq\, c \,  n^{-s/d} 
\, \isonormunit{f}
.\]
\end{cor}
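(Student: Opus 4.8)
The plan is to observe that the transformed deterministic algorithm $\widetilde{Q}_{n^{1/d}B}$ is nothing but a single realization of the randomized method $\widetilde{Q}_{n^{1/d}\hat{U}B}^V$, namely the one obtained by freezing the dilation parameter at $u=(1,\dots,1)$ and the shift parameter at $v=0$. First I would check that this identification is exact: for $u=(1,\dots,1)$ the dilation matrix $\hat{u}=\diag(1,\dots,1)$ is the identity, so $n^{1/d}\hat{u}B=n^{1/d}B$, and with $v=0$ the transformed rule $\widetilde{Q}_{n^{1/d}\hat{u}B}^v$ coincides termwise with $\widetilde{Q}_{n^{1/d}B}$ as defined in Section~\ref{transformationsection}.

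Next I would verify that this fixed pair lies in the admissible parameter region appearing in the supremum of Theorems~\ref{MSchlangebounds} and \ref{MSchlangebounds2}. Indeed $(1,\dots,1)\in[1,2^{1/d}]^d$, since each coordinate equals the lower endpoint $1$, and $0\in[0,1]^d$. Consequently the error $\abs{\widetilde{Q}_{n^{1/d}B}(f)-I_d(f)}$ is one of the quantities over which those theorems take a supremum, and is therefore bounded above by that supremum.

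Finally I would simply specialize. For $f\in\Hmixunit$ the worst-case bound in Theorem~\ref{MSchlangebounds} yields $\abs{\widetilde{Q}_{n^{1/d}B}(f)-I_d(f)}\le c\,n^{-r}(\log n)^{(d-1)/2}\,\mixnormunit{f}$, and for $f\in\Hisounit$ Theorem~\ref{MSchlangebounds2} yields $\abs{\widetilde{Q}_{n^{1/d}B}(f)-I_d(f)}\le c\,n^{-s/d}\,\isonormunit{f}$, with the same constants. There is essentially no obstacle here: the entire content of the corollary is the remark that a deterministic rule is one particular realization of the randomized family, so the already-established uniform bounds dominate it. The only point requiring a moment's care is that the identity dilation sits at the boundary corner of the box $[1,2^{1/d}]^d$ rather than in its interior; but since that box is closed the membership, and hence the bound, still holds.
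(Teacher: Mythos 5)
Your proposal is correct and is exactly the paper's argument: the paper likewise notes that $\widetilde{Q}_{n^{1/d}B}$ is a particular realization of $\widetilde{Q}_{n^{1/d}\hat{U}B}^V$ (namely $u=(1,\dots,1)\in[1,2^{1/d}]^d$, $v=0\in[0,1]^d$) and reads off the bounds from the suprema in Theorems~\ref{MSchlangebounds} and~\ref{MSchlangebounds2}. Your extra care about the boundary corner of the closed box is fine but not an issue.
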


It is also not hard to see, that the error bounds for $Q_{n^{1/d}\hat{U}B}^v$ from Theorem~\ref{mixthmworstcase},
\ref{isothmworstcase}, \ref{mixthm} and \ref{isothm}
on the classes $\Hmixunitcomp$ and $\Hisounitcomp$ are inherited by
the method $\widetilde{Q}_{n^{1/d}\hat{U}B}^v$ on the classes $\Hmixunit$ and $\Hisounit$
in the same way.\\

To sum up, both the expected error and the worst case error
of the method $\widetilde{Q}_{n^{1/d}\hat{U}B}^V$ have an optimal
rate of convergence
on both $\Hmixunit$ and $\Hisounit$.
In addition, the method is unbiased.
It is also worth stressing that the algorithm is universal:
It does not depend on the smoothness $r$ or $s$ of the input function in any way
and hence no prior knowledge of it is needed to run $\widetilde{Q}_{n^{1/d}\hat{U}B}^V$.
Nonetheless, the convergence rate of its error perfectly adjusts to that smoothness.
The same is valid for the algorithms $\widetilde{Q}_{n^{1/d}B}$ and $\widetilde{Q}_{n^{1/d}\hat{U}B}^v$.

\newpage

\end{document}